\documentclass[UTF8,a4paper,11pt,reqno]{amsart}
\usepackage[left=2.50cm, right=2.50cm, top=2.50cm, bottom=2.50cm]{geometry} 

\usepackage[pagewise]{lineno}
\usepackage{latexsym}
\usepackage{amssymb}
\usepackage{newlfont}
\usepackage{hyperref}
\setlength{\parindent}{2em}

\usepackage{mathrsfs}
\usepackage{amsmath}
\usepackage{amsthm}
\usepackage{booktabs}
\usepackage{amsfonts}
\usepackage{mathrsfs}

\newtheorem{thm}{Theorem}[section]
\newtheorem{cor}[thm]{Corollary}

\newtheorem{Def}[thm]{Definition}
\newtheorem{lemma}[thm]{Lemma}
\newtheorem{remark}[thm]{Remark}
\numberwithin{equation}{section}
\newcommand{\n}{\Vert}
\newcommand{\e}{\mathcal{E}}

\begin{document}

\title{\textbf{Long-time dynamics of the wave equation with nonlocal weak damping and sup-cubic nonlinearity in 3-D domains}}

\author[Senlin Yan]{Senlin Yan}

\address{Department of Mathematics, Nanjing University,
	Nanjing, Jiangsu, 210093, P.R. China} \email{dg20210019@smail.nju.edu.cn}

\author[Chengkui Zhong]{Chengkui Zhong$^*$}

\address{Department of Mathematics, Nanjing University,
	Nanjing, Jiangsu, 210093, P.R. China} \email{ckzhong@nju.edu.cn}

\author[Zhijun Tang]{Zhijun Tang}

\address{Department of Mathematics, Nanjing University,
	Nanjing, Jiangsu, 210093, P.R. China} \email{tzj960629@163.com}

\keywords {Global attractor; Wave equation; Nonlocal weak damping; Strichartz estimates; Polynomial attractor}

\thanks{$^*$ Corresponding author.}

\begin{abstract}
	In this paper, we study the long-time dynamics for the wave equation with nonlocal weak damping and sup-cubic nonlinearity in a bounded smooth domain of $\mathbb{R}^3.$ Based on the Strichartz estimates for the case of bounded domains, we first prove the global well-posedness of the Shatah-Struwe solutions. Then we establish the existence of the global attractor for the Shatah-Struwe solution semigroup by the method of contractive function. Finally, we verify the existence of a polynomial attractor for this semigroup.
\end{abstract}

\maketitle

\section{Introduction}
	In this paper, we consider the following damped wave equation:
	\begin{equation}\label{eq1.1}
		\begin{cases}
			u_{tt}-\Delta u+k\n u_t\n^pu_t+f(u)=g,\quad (x,t)\in\Omega\times\mathbb{R}^+,\\
			u|_{\partial\Omega}=0,\quad t\in\mathbb{R}^+,\\
			u|_{t=0}=u_0,\ u_t|_{t=0}=u_1,\quad x\in\Omega.
		\end{cases}
	\end{equation}
	Here, $\Omega\subset\mathbb{R}^3$ is a bounded domain with smooth boundary, $\Vert\cdot\Vert$ stands for the usual norm of $L^2(\Omega)$, $k,p>0$ are fixed constants, $g\in L^2(\Omega)$ is independent of time, and the nonlinearity $f\in C^1(\mathbb{R})$ satisfies the following growth and dissipation conditions:
	\begin{align}
		\label{1.2}&|f'(s)|\leq C(1+|u|^{4-\kappa}),\quad s\in\mathbb{R},\\
		\label{1.3}&\liminf_{|s|\rightarrow\infty}f'(s)\equiv\mu>-\lambda_1,
	\end{align}
	where $C$ is a positive constant , $0<\kappa\leq4$ and $\lambda_1>0$ is the first eigenvalue of $-\Delta$ on $L^2(\Omega)$ with Dirichlet boundary conditions.
    
    The long-time dynamics of wave equations with nonlocal damping terms has been investigated extensively by many authors (see e.g. \cite{Bala,Cava,Silva1,Silva2,Silva3}). Recently, in \cite{Zhao3}, the authors studied the long-time behavior of a beam model with the nonlocal weak damping $\n u_t\n^pu_t$, and in \cite{Zhao1,Zhao2}, they also considered 
    the following wave equation with nonlocal weak damping and anti-damping in $\mathbb{R}^n(n\geq3)$
    \begin{equation}\label{zhao}
    	u_{tt}-\Delta u+k\n u_t\n^pu_t+f(u)=\int_{\Omega}K(x,y)u_t(y)dy+h(x),
    \end{equation}
    where $\int_{\Omega}K(x,y)u_t(y)dy$ is the anti-damping term and the growth exponent $q$ of the nonlinearity $f$ is up to the range: $0<q\leq\frac{n}{n-2}$ (in the case $n=3$, $0<q\leq3$). They obtained the well-posedness of problem (\ref{zhao}) in the natural energy space $H_0^1(\Omega)\times L^2(\Omega)$ and proved the existence of global attractors. It is worth mentioning that the nonlocal weak damping $\n u_t\n^pu_t$ was first proposed.
    
     We are motivated by \cite{Zhao1} to study problem (\ref{eq1.1}) where the growth exponent $q$ of the nonlinearity $f$ is allowed to be sup-cubic: $3<q<5$ (see condition (\ref{1.2})). Differently from Eq. (\ref{zhao}), since we are more interested in the nonlinearity with higher growth exponent, the anti-damping term is not considered in (\ref{eq1.1}).
    
    Similarly to the weakly damped wave equation, the long-time dynamics of problem (\ref{eq1.1}) depends strongly on the growth rate of nonlinearity $f$. One can verify the global well-posedness of weak energy solutions (solutions in the space $H_0^1(\Omega)\times L^2(\Omega)$) to problem (\ref{eq1.1}) only for the cases when the growth rate of $f$ is cubic or sub-cubic. Fortunately, with the help of suitable versions of Strichartz estimates and Morawetz-Pohozhaev identity in the case of bounded domains, one can obtain the global well-posedness of so-called Shatah-Struwe solutions (see Definition \ref{SS}) with sup-cubic nonlinearity $f$. Recently, Kalantarov et al. \cite{Zelik1} proved the existence and regularities of the compact global attractor of the Shatah-Struwe solution semigroup arising from weakly damped wave equation:
    \[u_{tt}+\gamma u_t-\Delta u+f(u)=g,\]
	where the nonlinearity $f(u)$ is up to quintic growth. After that, Liu et al. \cite{Liu1} considered the weakly damped wave equation with the source term $g\in H^{-1}$ and they obtained the existence of a global attractor as well as exponential attractor when the growth exponent of the nonlinearity satisfies $1<q<\frac{25}{7}$. Furthermore, in \cite{Savostianov,Savostianov2,Savostianov3}, the authors analyzed a more general situation, with fractional damping term $(-\Delta)^\theta u_t,\ \theta\in[0,1]$ in place of $u_t$. In this paper, we are devoted to study the long-time dynamics of Shatah-Struwe solutions to problem (\ref{eq1.1}).
	
	We want to point out here that the nonlocal weak damping and the sup-cubic nonlinearity together bring us difficulties to prove the well-posedness.
	 Since the function $\n u_t\n$ is not weakly continuous in $L^2$, it is difficult to use Galerkin's method directly to obtain the existence of solutions. Noticing that $\n u_t\n^pu_t$ is a monotone operator in $L^2(\Omega)$, the authors in \cite{Zhao1} used Theorem 1.5 in \cite{Chueshov2008} to obtain the well-posedness. However, in the sup-cubic case, $f(u):H^1_0\rightarrow L^2$ is no longer locally Lipschitz, thus Theorem 1.5 in \cite{Chueshov2008} is invalid. Inspired by \cite{Yang}, we combine the Galerkin's method and the monotonicity of $\n u_t\n^pu_t$ and finally obtain the well-posedness of problem (\ref{eq1.1}).
	
	The polynomial attractor is a concept to quantitatively describe
	the attractive velocity of attractors. A polynomical attractor is a positively invariant compact set which attracts any bounded set at a polynomial rate (see Definition \ref{poly}). In \cite{Zhao2,Zhao5}, the authors introduced the concept of $\varphi$-attractor for the first time and established some criteria for the existence of a polynomial attractor. Moreover, they applyed these abstract theorems to Eq. (\ref{zhao}) and obtained the existence of polynomial attractors with sub-critical ($q<\frac{n}{n-2}$) or critical ($q=\frac{n}{n-2}$) nonlinearity $f$. In this paper, we will use the abstract theorem established in \cite{Zhao5} to prove the existence of a polynomial attractor of the Shatah-Struwe solution semigroup generated by problem (\ref{eq1.1}).
	
	The rest of this paper is organized as follows. In Section 2, we give some preliminary things including Strichartz estimates and some abstract results. In Section 3, we discuss the global well-posedness of Shatah-Struwe solutions. In Section 4, we prove the existence of the global attractor for the corresponding dynamical system. Finally, the existence of a polynomial attractor is established in Section 6.

\section{Preliminaries}
	We first introduce the following abbreviations:
	\[L^p=L^p(\Omega),\quad H^1_0=H^1_0(\Omega),\quad \n\cdot\n=\n\cdot\n_{L^2},\]
	with $p\geq1.$ The notation $(\cdot,\cdot)$ stands for the usual inner product in $L^2.$ $X\hookrightarrow Y$ denotes that the space $X$ continuously embeds into $Y$ and $X\hookrightarrow\hookrightarrow Y$ denotes that $X$ compactly embeds into $Y.$ Let
	\[\e=H_0^1\times L^2,\]
	equipped with the usual graph norm:
	\[\n\xi_u\n_\e^2=\n\nabla u\n^2+\n u_t\n^2,\quad \xi_u:=(u,u_t)\in\e.\]

	We now recall the Strichartz estimates for wave equations in bounded domains. (see \cite{Blair,Burq1,Burq2} for details).
	\begin{equation}\label{eq2.1}
		\begin{cases}
			u_{tt}-\Delta u=G(t),\quad (x,t)\in\Omega\times\mathbb{R}^+,\\
			u|_{\partial\Omega}=0,\quad t\in\mathbb{R}^+,\\
			\xi_u(0)=\xi_0,\quad x\in\Omega.
		\end{cases}
	\end{equation}
	\begin{lemma}\cite{Blair}\label{lemma2.1}
		Let $\xi_0\in\e,\ G\in L^1(0,T;L^2)$ and $u(t)$ be a solution of equation (\ref{eq2.1}) such that $\xi_u\in C(0,T;\e)$. Then $u\in L^4(0,T;L^{12})$ and the following estimate holds:
		\begin{equation}
		\n u\n_{L^4(0,T;L^{12})}\leq C_T(\n\xi_0\n_\e+\n G\n_{L^1(0,T;L^{2})}),
		\end{equation}
		where the constant $C_T$ may depend on $T$, but is independent of $\xi_0$ and $G$.
	\end{lemma}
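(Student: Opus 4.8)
The plan is to use Duhamel's formula to split $u$ into the free evolution of the initial data plus a forced term, and then to reduce the whole estimate to a single homogeneous Strichartz bound. Writing $-\Delta$ for the Dirichlet Laplacian on $\Omega$, the solution of (\ref{eq2.1}) reads
\[
u(t)=\cos(t\sqrt{-\Delta})u_0+\frac{\sin(t\sqrt{-\Delta})}{\sqrt{-\Delta}}u_1+\int_0^t\frac{\sin((t-\tau)\sqrt{-\Delta})}{\sqrt{-\Delta}}G(\tau)\,d\tau .
\]
Before anything else I would record that the target pair $(4,12)$ is wave-admissible for $H^1$-data in dimension three: it obeys the scaling identity $\tfrac14+\tfrac{3}{12}=\tfrac12$ together with the boundary admissibility constraint $\tfrac34+\tfrac{2}{12}=\tfrac{11}{12}\le1$, so it sits (strictly) inside the range for which Strichartz estimates are known to hold on bounded domains.

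For the forced term I would exploit the hypothesis $G\in L^1(0,T;L^2)$, which makes the reduction especially clean. By Minkowski's integral inequality,
\[
\n\int_0^t\frac{\sin((t-\tau)\sqrt{-\Delta})}{\sqrt{-\Delta}}G(\tau)\,d\tau\n_{L^4(0,T;L^{12})}\le\int_0^T\n\frac{\sin((t-\tau)\sqrt{-\Delta})}{\sqrt{-\Delta}}G(\tau)\n_{L^4(0,T;L^{12})}\,d\tau ,
\]
and for each fixed $\tau$ the inner function is the free wave evolution of the data $(0,G(\tau))\in\e$, whose energy norm equals $\n G(\tau)\n$. Hence the forced contribution is dominated by $C_T\n G\n_{L^1(0,T;L^2)}$ as soon as the homogeneous estimate is available, and no Christ--Kiselev-type argument is needed precisely because of the $L^1$ integrability in time. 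The two initial-data terms are likewise free evolutions with $\e$-data controlled by $\n\xi_0\n_\e$.

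Everything thus reduces to the homogeneous estimate $\n e^{\pm it\sqrt{-\Delta}}f\n_{L^4(0,T;L^{12})}\le C_T\n f\n_{H^1_0}$, which is the genuinely hard analytic input. My approach would follow the standard spectral-localization scheme: decompose $f$ via Littlewood--Paley projectors built from the Dirichlet spectral resolution of $-\Delta$, prove a frequency-localized Strichartz bound at each dyadic scale $\lambda$ over a short time window, reassemble by square-function and Littlewood--Paley theory, and finally partition $[0,T]$ into $O(T)$ unit intervals, finite speed of propagation making the localization legitimate. The frequency-localized estimate is where the geometry of $\Omega$ enters and where the real difficulty lies.

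The main obstacle is precisely this boundary-sensitive dispersive estimate. In contrast to $\mathbb{R}^3$, where an explicit oscillatory-integral representation of the half-wave propagator yields sharp dispersion through stationary phase, near $\partial\Omega$ one must track reflected, gliding and grazing rays, and these diffractive effects are exactly what force the restricted admissibility range $\tfrac{3}{p}+\tfrac{n-1}{q}\le\tfrac{n-1}{2}$ in place of the full Keel--Tao range. I would extract the fixed-scale bound from a Melrose--Taylor-type parametrix for the reflected wave together with square-function estimates for the wave operator on manifolds with boundary; this is the machinery developed in \cite{Blair}, and the verification above that $(4,12)$ lies inside the admissible range for $n=3$ then closes the argument. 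Finally, the continuity hypothesis $\xi_u\in C(0,T;\e)$ legitimizes the Duhamel representation and lets one justify the a priori estimate by smooth approximation of the data.
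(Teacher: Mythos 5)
The paper does not prove this lemma at all --- it is quoted directly from Blair--Smith--Sogge \cite{Blair}, so there is no internal proof to compare against. Your invocation of that theorem is correct: the admissibility check $\tfrac14+\tfrac{3}{12}=\tfrac12=\tfrac32-1$ together with $\tfrac34+\tfrac{2}{12}=\tfrac{11}{12}\le 1$ does place the pair $(4,12)$ with $H^1$ data inside the admissible range for $n=3$, and the Duhamel--Minkowski reduction of the $L^1(0,T;L^2)$ forcing to the homogeneous bound is the standard (and valid) way to dispense with any Christ--Kiselev argument. The genuinely hard analytic core --- the frequency-localized squarefunction/dispersive estimates near $\partial\Omega$ via the Melrose--Taylor parametrix --- remains a citation in your sketch, exactly as it does in the paper, so your proposal is best read as a correct verification that the cited theorem applies in the form stated, which is the appropriate level of detail here.
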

	We also state some lemmas, which are indispensable for our proof.
	\begin{lemma}\cite[Lemma 4.2]{Zhao5}\label{Zhao5}
		Let $(H,\langle\cdot,\cdot\rangle)$ be a an inner product space and $\n\cdot\n_H$ be the norm induced by $\langle\cdot,\cdot\rangle$. For any constant $p>1$, there exists $c_p>0$, such that $\forall x,y\in H$, $\n x\n_H\cdot\n y\n_H\neq0$, 
		\[\langle\n x\n_H^{p-2}x-\n y\n_H^{p-2}y,x-y\rangle\geq
		\begin{cases}
			C_p\n x-y\n_H^p,\quad if\ p\geq2;\\
			C_p\frac{\n x-y\n_H^2}{(\n x\n_H+\n y\n_H)^{2-p}},\quad if\ 1<p<2.
		\end{cases}\]
	\end{lemma}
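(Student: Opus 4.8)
The plan is to reduce everything to a computation involving only the scalars $a=\n x\n_H$, $b=\n y\n_H$ and the single inner product $\langle x,y\rangle$, and then to treat the regimes $p\ge 2$ and $1<p<2$ by genuinely different arguments. Writing $\Phi(z):=\n z\n_H^{p-2}z$ and expanding directly, I would first establish the exact identity
\[\langle\Phi(x)-\Phi(y),x-y\rangle=a^p+b^p-(a^{p-2}+b^{p-2})\langle x,y\rangle.\]
Substituting $\langle x,y\rangle=\tfrac12(a^2+b^2-r^2)$ with $r:=\n x-y\n_H$ and simplifying (using $(a^{p-2}+b^{p-2})(a^2+b^2)=a^p+b^p+a^{p-2}b^2+a^2b^{p-2}$) turns this into
\[\langle\Phi(x)-\Phi(y),x-y\rangle=\tfrac12(a^2-b^2)(a^{p-2}-b^{p-2})+\tfrac12(a^{p-2}+b^{p-2})r^2,\]
which is the common starting point for both cases.

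For $p\ge2$ I would argue purely algebraically. Since $s\mapsto s^2$ and $s\mapsto s^{p-2}$ are both nondecreasing on $(0,\infty)$, the factors $a^2-b^2$ and $a^{p-2}-b^{p-2}$ have the same sign, so the first term above is $\ge0$ and may be discarded. It then remains to bound $a^{p-2}+b^{p-2}$ from below: using $r\le a+b$ together with $\max(a,b)\ge\tfrac12(a+b)\ge\tfrac r2$ and $p-2\ge0$ gives $a^{p-2}+b^{p-2}\ge\max(a,b)^{p-2}\ge(\tfrac r2)^{p-2}$, whence $\langle\Phi(x)-\Phi(y),x-y\rangle\ge 2^{1-p}\,r^p$, i.e. $C_p=2^{1-p}$.

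For $1<p<2$ the first term in the identity is now $\le0$, so the algebra alone fails; here I would exploit that $\Phi=\nabla G$ with $G(z)=\tfrac1p\n z\n_H^p$ and pass through the second derivative, regularizing to avoid the singularity of $\Phi$ at the origin (the segment from $y$ to $x$ may cross $0$). Setting $\Phi_\varepsilon(z):=(\n z\n_H^2+\varepsilon)^{(p-2)/2}z$, which is smooth since $\n z\n_H^2+\varepsilon\ge\varepsilon>0$, a direct computation of its Jacobian gives $\langle D\Phi_\varepsilon(z)h,h\rangle=(\n z\n_H^2+\varepsilon)^{(p-2)/2}\n h\n_H^2+(p-2)(\n z\n_H^2+\varepsilon)^{(p-4)/2}\langle z,h\rangle^2$; using Cauchy--Schwarz $\langle z,h\rangle^2\le\n z\n_H^2\n h\n_H^2\le(\n z\n_H^2+\varepsilon)\n h\n_H^2$ and the sign $p-2<0$ yields $\langle D\Phi_\varepsilon(z)h,h\rangle\ge(p-1)(\n z\n_H^2+\varepsilon)^{(p-2)/2}\n h\n_H^2$. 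Integrating the fundamental theorem of calculus along the segment $z_s=y+s(x-y)$ with $h=x-y$, and bounding $\n z_s\n_H\le a+b$ (so that, since $p-2<0$, the weight is $\ge((a+b)^2+\varepsilon)^{(p-2)/2}$), I obtain $\langle\Phi_\varepsilon(x)-\Phi_\varepsilon(y),x-y\rangle\ge(p-1)((a+b)^2+\varepsilon)^{(p-2)/2}\n x-y\n_H^2$. Letting $\varepsilon\to0$ (legitimate since $x,y\ne0$) gives the claim with $C_p=p-1$.

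The routine parts are the expansion of the identity and the monotonicity estimate in the case $p\ge2$. The delicate point is the case $1<p<2$: the map $\Phi$ is not differentiable at the origin and the unregularized Hessian weight $\n z\n_H^{p-2}$ is unbounded, so the $\varepsilon$-regularization (or, alternatively, a direct check that $s\mapsto\langle\Phi(z_s),x-y\rangle$ is absolutely continuous, which holds precisely because $p-1>0$) is what makes the fundamental theorem of calculus and the passage to the limit rigorous. I also note that the two regimes genuinely require different arguments, since for $p\ge2$ the weight $\n z_s\n_H^{p-2}$ admits the useful upper bound $(a+b)^{p-2}$ but no corresponding lower bound in terms of $r$, which is exactly why the integral route is reserved for $1<p<2$.
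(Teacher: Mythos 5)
Your proof is correct. Note that there is nothing in the paper to compare it against: the lemma is quoted without proof from \cite[Lemma 4.2]{Zhao5}, so your argument is a self-contained substitute rather than a variant of an internal one. Both halves check out. The identity $\langle\Phi(x)-\Phi(y),x-y\rangle=\tfrac12(a^2-b^2)(a^{p-2}-b^{p-2})+\tfrac12(a^{p-2}+b^{p-2})\Vert x-y\Vert^2$ is exact; for $p\ge2$ the first term is nonnegative and the chain $a^{p-2}+b^{p-2}\ge\max(a,b)^{p-2}\ge(\Vert x-y\Vert/2)^{p-2}$ legitimately uses $p-2\ge0$, giving $C_p=2^{1-p}$. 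For $1<p<2$ the regularization $\Phi_\varepsilon(z)=(\Vert z\Vert_H^2+\varepsilon)^{(p-2)/2}z$ is exactly what is needed to make the fundamental theorem of calculus applicable across a segment that may pass through the origin; the Cauchy--Schwarz step $\langle D\Phi_\varepsilon(z)h,h\rangle\ge(p-1)(\Vert z\Vert_H^2+\varepsilon)^{(p-2)/2}\Vert h\Vert_H^2$ uses the sign of $p-2$ correctly, the bound $\Vert z_s\Vert_H\le\Vert x\Vert_H+\Vert y\Vert_H$ is monotone in the right direction because the exponent is negative, and the limit $\varepsilon\to0$ is harmless since $\Vert x\Vert_H\cdot\Vert y\Vert_H\neq0$ guarantees $a+b>0$. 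Your version even yields explicit constants ($2^{1-p}$ and $p-1$), which is a small bonus over the bare citation; your closing observation about why the two regimes genuinely need different arguments is also accurate, since the crude lower bound on the Hessian weight along the segment degenerates in the regime $p\ge2$.
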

	\begin{lemma}\cite{Savostianov}\label{Savostianov}
		Suppose that a function $y(s)\in C([a,b))$ satisfies $y(a)=0,\ y(s)\geq0$ and 
		\[y(s)\leq C_0y(s)^\sigma+\varepsilon\]
		for some $\sigma>1,\ 0<C_0<\infty$ and $0<\varepsilon<\frac{1}{2}(\frac{1}{2C_0})^\frac{1}{\sigma-1}$. Then
		\[y(s)\leq2\varepsilon,\quad \forall s\in[a,b).\]
	\end{lemma}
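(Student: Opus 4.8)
The plan is to run a continuity (barrier) argument based on the intermediate value theorem, exploiting the fact that the hypothesis forbids $y(s)$ from taking a critical value that the constraint on $\varepsilon$ pins down precisely. First I would rewrite the standing inequality as $\Phi(y(s))\geq 0$ for all $s\in[a,b)$, where $\Phi(r):=C_0r^\sigma-r+\varepsilon$; this is merely a relabeling of $y(s)\leq C_0y(s)^\sigma+\varepsilon$.

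The crux is to identify the barrier value $r=2\varepsilon$ and to verify that the assumption $0<\varepsilon<\tfrac12\bigl(\tfrac{1}{2C_0}\bigr)^{1/(\sigma-1)}$ forces $\Phi(2\varepsilon)<0$. Indeed, the bound on $\varepsilon$ gives $2\varepsilon<\bigl(\tfrac{1}{2C_0}\bigr)^{1/(\sigma-1)}$, and raising this to the power $\sigma-1>0$ yields $C_0(2\varepsilon)^{\sigma-1}<\tfrac12$. Hence $\Phi(2\varepsilon)=C_0(2\varepsilon)^\sigma-2\varepsilon+\varepsilon=(2\varepsilon)\,C_0(2\varepsilon)^{\sigma-1}-\varepsilon<\varepsilon-\varepsilon=0$. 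In particular the value $y(s)=2\varepsilon$ is impossible for every $s$, since it would force $\Phi(y(s))<0$ and contradict the standing inequality.

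With the barrier in place, I would conclude by continuity. Since $y$ is continuous on $[a,b)$ with $y(a)=0<2\varepsilon$, if some $s_0\in[a,b)$ satisfied $y(s_0)>2\varepsilon$, then the intermediate value theorem would produce $s_1\in(a,s_0)$ with $y(s_1)=2\varepsilon$, contradicting the previous step. Therefore $y(s)<2\varepsilon$, and a fortiori $y(s)\leq 2\varepsilon$, for all $s\in[a,b)$.

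I do not anticipate a genuine obstacle: the entire content is the elementary but sharp calibration showing $\Phi(2\varepsilon)<0$, after which the connectedness of $[a,b)$ (equivalently the intermediate value theorem) does the rest. The only points requiring minor care are that $\sigma>1$ makes the exponent $\sigma-1$ positive, so raising to that power preserves the inequality, and that the \emph{strictness} of $\Phi(2\varepsilon)<0$ is what guarantees the value $2\varepsilon$ is never attained rather than merely not exceeded.
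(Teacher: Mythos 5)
Your proof is correct: the calibration $C_0(2\varepsilon)^{\sigma}<\varepsilon$ under the stated bound on $\varepsilon$, hence $\Phi(2\varepsilon)<0$, combined with $y(a)=0$ and the intermediate value theorem, is exactly the standard barrier argument for this lemma. The paper itself cites the result from the reference without reproducing a proof, and your argument is the one used there, so there is nothing to add.
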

	\begin{lemma}(Aubin-Dubinskii-Lions Lemma,\cite{Lions})\label{Aubin}
		Assume that $X\subset Y\subset Z$ is a triple of Banach spaces such that $X\hookrightarrow\hookrightarrow Y\hookrightarrow Z$.\\
		(i) Let $W_1=\{u\in L^p(a,b;X)|u_t\in L^q(a,b;Z)\}$ for some $1\leq p\leq\infty$ and $q\geq1$. Here $u_t$ denotes the derivative of $u$ in the distributional sense. Then, $W_1\hookrightarrow\hookrightarrow L^p(a,b;Y).$ If $q>1$, then $W_1\hookrightarrow\hookrightarrow C(a,b;Z).$\\
		(ii) Let $W_2=\{u\in L^\infty(a,b;X)|u_t\in L^r(a,b;Z)\}$ for some $r>1$, then $W_2\hookrightarrow\hookrightarrow C(a,b;Y)$.
	\end{lemma}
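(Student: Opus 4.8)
The plan is to reduce everything to a single interpolation inequality of Ehrling type and then feed it into standard vector-valued compactness criteria. First I would establish the Ehrling--Lions inequality: since $X \hookrightarrow\hookrightarrow Y \hookrightarrow Z$, for every $\eta > 0$ there exists $C_\eta > 0$ with $\n v\n_Y \le \eta\n v\n_X + C_\eta\n v\n_Z$ for all $v \in X$. This is proved by contradiction: if it failed for some $\eta_0$, one obtains a sequence $v_n$ with $\n v_n\n_Y = 1$, $\n v_n\n_X$ bounded and $\n v_n\n_Z \to 0$; the compactness of $X \hookrightarrow\hookrightarrow Y$ extracts a $Y$-convergent subsequence whose limit must be $0$ on account of the $Z$-convergence, contradicting $\n v_n\n_Y = 1$.

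For the first claim of (i), namely $W_1 \hookrightarrow\hookrightarrow L^p(a,b;Y)$, I would take a bounded sequence $\{u_n\}$ in $W_1$ and verify the hypotheses of the vector-valued Fr\'echet--Kolmogorov (Riesz) compactness theorem in $L^p(a,b;Y)$. The decisive point is the uniform equicontinuity of time translations: applying the Ehrling inequality pointwise,
\[
\n u_n(\cdot+h)-u_n(\cdot)\n_{L^p(a,b-h;Y)} \le \eta\,\n u_n(\cdot+h)-u_n(\cdot)\n_{L^p(a,b-h;X)} + C_\eta\,\n u_n(\cdot+h)-u_n(\cdot)\n_{L^p(a,b-h;Z)}.
\]
The $X$-term is bounded by $2\eta\sup_n\n u_n\n_{L^p(a,b;X)}$, hence small for small $\eta$; the $Z$-term is controlled through $u_n(t+h)-u_n(t)=\int_t^{t+h}\partial_s u_n(s)\,ds$ and H\"older's inequality by $h^{1-1/q}\n \partial_t u_n\n_{L^q(a,b;Z)}$, which tends to $0$ as $h\to0$ uniformly in $n$. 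Combined with the relative compactness of the short-time averages in $Y$ (again via the $X$-bound and the compact embedding), this yields precompactness in $L^p(a,b;Y)$.

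For the continuity statement in (i) (when $q>1$) and for part (ii), I would pass to uniform-in-time estimates. When $q>1$, the bound $\n u_n(t)-u_n(s)\n_Z \le |t-s|^{1-1/q}\n\partial_t u_n\n_{L^q(a,b;Z)}$ gives equicontinuity in $C(a,b;Z)$; together with pointwise relative compactness in $Z$ (obtained by averaging over short intervals to produce an $X$-bounded family and invoking $X\hookrightarrow\hookrightarrow Z$), the Arzel\`a--Ascoli theorem for $Z$-valued functions yields $W_1 \hookrightarrow\hookrightarrow C(a,b;Z)$. In case (ii), the $L^\infty(a,b;X)$ bound lets me upgrade this: given a subsequence converging in $C(a,b;Z)$, the Ehrling inequality applied to the differences, with the uniform $X$-bound absorbing the $X$-term, promotes the convergence to $C(a,b;Y)$.

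I expect the main obstacle to be precisely this last upgrade from the weak space $Z$ to the intermediate space $Y$ uniformly in time, which is where the $L^\infty$-in-$X$ hypothesis of (ii), rather than the mere $L^p$-in-$X$ of (i), becomes indispensable. The other genuinely delicate point is verifying the relative compactness of the sections and short-time averages in $Y$ needed to close the Fr\'echet--Kolmogorov argument, since the pointwise values $u_n(t)$ need not be uniformly bounded in $X$ and must instead be handled through mollification in the time variable.
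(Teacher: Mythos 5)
The paper offers no proof of this lemma: it is quoted from Lions' book as a known result, so there is no in-paper argument to compare against. Your sketch is the classical proof (Ehrling's interpolation inequality fed into the vector-valued Fr\'echet--Kolmogorov criterion for the $L^p(a,b;Y)$ statement, and into Arzel\`a--Ascoli for the $C(a,b;Z)$ and $C(a,b;Y)$ statements), and it is essentially correct, including the two points you flag as delicate --- the short-time averaging to get pointwise relative compactness, and the use of the $L^\infty(a,b;X)$ bound to upgrade uniform $Z$-convergence to uniform $Y$-convergence in part (ii).

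The one step that does not close as written is the equicontinuity estimate in the case $q=1$, which the hypothesis of part (i) explicitly allows: there the exponent $1-1/q$ vanishes, so $h^{1-1/q}\Vert\partial_t u_n\Vert_{L^q(a,b;Z)}$ is merely bounded and does not tend to $0$ as $h\to 0$. For $q=1$ and $p<\infty$ one should instead estimate the translated difference by Fubini (equivalently, Young's convolution inequality applied to $\Vert\partial_t u_n\Vert_Z * \mathbf{1}_{[0,h]}$), which yields $\Vert u_n(\cdot+h)-u_n(\cdot)\Vert_{L^p(a,b-h;Z)}\le h^{1/p}\Vert\partial_t u_n\Vert_{L^1(a,b;Z)}$ and restores the uniform smallness. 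With that repair your argument is the standard complete proof; for the purposes of this paper only the cases actually invoked (bounded $\xi_{u^N}$ in $L^\infty(0,T;\e)$ with $u^N_{tt}$ in a dual space, i.e.\ part (ii) and the $q>1$ branch of (i)) are needed, and those your sketch already covers.
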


\section{Well-posedness}
    The aim of this section is to study the well-posedness of the Shatah-Struwe solutions of problem (\ref{eq1.1}). We start with the definition of weak solutions.
	\begin{Def}
		For any $T>0$, a function $u(t),\ t\in[0,T]$ is said to be a weak solution of problem (\ref{eq1.1}) if $\xi_u\in L^\infty(0,T;\e)$ and Eq. (\ref{eq1.1}) is satisfied in the sense of distribution, i.e.
		\[-\int_{0}^{T}(u_t,\phi_t)dt+k\int_{0}^{T}\n u_t\n^p(u_t,\phi)dt+\int_{0}^{T}(\nabla u,\nabla\phi)dt+\int_{0}^{T}(f(u),\phi)dt=\int_{0}^{T}(g,\phi)dt,\]
		for any $\phi\in C_c^\infty((0,T)\times\Omega)$.
	\end{Def}
	To the best of our knowledge, the uniqueness of the weak solution to (\ref{eq1.1}) is still unknown when the nonlinearity satisfies (\ref{1.2}). Thus, following \cite{Zelik1}, we introduce the so-called Shatah-Struwe solutions.
	\begin{Def}\label{SS}
		A weak solution $u(t),\ t\in[0,T]$ is a  Shatah-Struwe solution of problem (\ref{eq1.1}) if the following additional regularity holds:
		\[u\in L^4(0,T;L^{12}).\]
	\end{Def}
	We first prove the local existence of Shatah-Struwe solutions.
	\begin{thm}\label{thm3.3}
		Let the nonlinearity $f$ satisfy assumptions (\ref{1.2}) and (\ref{1.3}). Then for any initial data $\xi_0\in\e$, there exists $T=T(\n\xi_0\n_\e)>0$ such that problem (\ref{eq1.1}) admits a Shatah-Struwe solution $u(t)$ on the interval $t\in[0,T]$ and the following estimate holds:
		\begin{equation}\label{3-3-0}
		      \n\xi_u(t)\n_\e+\n u\n_{L^4(0,T;L^{12})}\leq Q(\n\xi_0\n_\e),\quad t\in[0,T]
		\end{equation}
	    for some monotone function $Q$ which is independent of $u$.
	\end{thm}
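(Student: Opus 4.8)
The plan is to construct the Shatah--Struwe solution as a limit of Galerkin approximations, controlling the $\e$-norm by a global energy estimate, the $L^4(0,T;L^{12})$-norm by the Strichartz estimate of Lemma \ref{lemma2.1} together with the bootstrap Lemma \ref{Savostianov}, and passing to the limit in the nonlocal damping by means of the monotonicity Lemma \ref{Zhao5}. More precisely, I would fix the $L^2$-orthonormal basis $\{e_j\}$ of eigenfunctions of $-\Delta$ with Dirichlet boundary conditions and look for $u^n(t)=\sum_{j=1}^{n}a_j^n(t)e_j$ solving the projected system with $\xi_{u^n}(0)\to\xi_0$ in $\e$. Since the damping coefficient $\n u_t^n\n^p=\big(\sum_i|\dot a_i^n|^2\big)^{p/2}$ and the terms $(f(u^n),e_j)$ depend continuously on the coefficients, Peano's theorem yields a local solution; testing the system with $u_t^n$ gives the energy identity
\begin{equation*}
\frac{d}{dt}\Big(\tfrac12\n u_t^n\n^2+\tfrac12\n\nabla u^n\n^2+\int_\Omega F(u^n)\,dx-(g,u^n)\Big)=-k\n u_t^n\n^{p+2}\le0,\qquad F(s)=\int_0^s f(r)\,dr,
\end{equation*}
and condition (\ref{1.3}) together with the Poincar\'e inequality makes the bracketed functional coercive in $\n\xi_{u^n}\n_\e$. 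Hence it is nonincreasing and $\n\xi_{u^n}(t)\n_\e\le Q(\n\xi_0\n_\e)$ uniformly in $n$ and $t\ge0$; in particular the approximations are global and defined on $[0,T]$ for every $T$.

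Next I would apply Lemma \ref{lemma2.1} to $u^n$, which solves $u_{tt}^n-\Delta u^n=P_nG^n$ with $G^n=g-f(u^n)-k\n u_t^n\n^pu_t^n$ and $P_n$ the orthogonal projection, so that $\n P_nG^n\n_{L^1(0,T;L^2)}\le\n G^n\n_{L^1(0,T;L^2)}$ and the constant $C_T$ is independent of $n$. The damping contributes $\n\,\n u_t^n\n^pu_t^n\,\n=\n u_t^n\n^{p+1}\le Q(\n\xi_0\n_\e)$, and $g$ is harmless. The delicate term is $f(u^n)$: from (\ref{1.2}) one has $\n f(u^n)\n\le C(1+\n u^n\n_{L^{2(5-\kappa)}}^{5-\kappa})$, and interpolating the $L^{2(5-\kappa)}$-norm between $L^6$ (controlled by $\n\nabla u^n\n$ through $H^1_0\hookrightarrow L^6$) and $L^{12}$, followed by H\"older in time, gives
\begin{equation*}
\n f(u^n)\n_{L^1(0,T;L^2)}\le CT+C\,Q(\n\xi_0\n_\e)\,T^{\kappa/2}\,\n u^n\n_{L^4(0,T;L^{12})}^{4-2\kappa}.
\end{equation*}
Writing $y(s)=\n u^n\n_{L^4(0,s;L^{12})}$ and inserting this into Lemma \ref{lemma2.1} yields $y(s)\le\varepsilon+C_0\,y(s)^{4-2\kappa}$ with $\varepsilon=C_T(\n\xi_0\n_\e+CT)$ and $C_0=C_TQ(\n\xi_0\n_\e)T^{\kappa/2}$. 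In the strongly sup-cubic range $4-2\kappa>1$ we have $C_0\to0$ as $T\to0$ while $\varepsilon$ stays bounded, so for $T=T(\n\xi_0\n_\e)$ small the smallness hypothesis of Lemma \ref{Savostianov} holds and $\n u^n\n_{L^4(0,T;L^{12})}\le2\varepsilon\le Q(\n\xi_0\n_\e)$ uniformly in $n$; when $4-2\kappa\le1$ the last term is absorbed directly for small $T$.

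Finally, these uniform bounds give, along a subsequence, $\xi_{u^n}\overset{*}{\rightharpoonup}\xi_u$ in $L^\infty(0,T;\e)$ and $u^n\rightharpoonup u$ in $L^4(0,T;L^{12})$, while Lemma \ref{Aubin}(ii) applied with $H^1_0\hookrightarrow\hookrightarrow L^2\hookrightarrow H^{-1}$ gives $u^n\to u$ strongly in $C(0,T;L^2)$, hence a.e.; together with the bound on $f(u^n)$ in $L^{4/(4-2\kappa)}(0,T;L^2)$ this yields $f(u^n)\to f(u)$ in $L^1(0,T;L^2)$. Passing to the limit in the Galerkin identity, $u$ solves (\ref{eq1.1}) with $\chi:=\text{w-}\lim\n u_t^n\n^pu_t^n$ in place of $\n u_t\n^pu_t$. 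The main obstacle is to identify $\chi=\n u_t\n^pu_t$, which does not follow from weak convergence because the scalar $\n u_t^n\n$ is not weakly continuous. Here I would use monotonicity: by Lemma \ref{Zhao5} (with exponent $p+2$) the map $v\mapsto\n v\n^pv$ is monotone on $L^2$, and comparing the energy identity for $u^n$ with that for the limiting equation---the latter being rigorous precisely because of the regularity $u\in L^4(0,T;L^{12})$---gives, via weak lower semicontinuity together with the strong $L^2$-convergence, $\limsup_n\int_0^T\n u_t^n\n^{p+2}\,dt\le\int_0^T(\chi,u_t)\,dt$. Minty's device, namely passing to the limit in $\int_0^T(\n u_t^n\n^pu_t^n-\n w\n^pw,\,u_t^n-w)\,dt\ge0$ and then choosing $w=u_t-\lambda z$ and letting $\lambda\to0$, forces $\chi=\n u_t\n^pu_t$. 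Estimate (\ref{3-3-0}) is then inherited from the uniform bounds by the weak lower semicontinuity of the $\e$- and $L^4(0,T;L^{12})$-norms.
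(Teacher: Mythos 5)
Your proof is correct and follows the same overall architecture as the paper's: Galerkin approximation, a uniform energy bound from the damped energy identity, a Strichartz bootstrap via Lemmas \ref{lemma2.1} and \ref{Savostianov}, and identification of the weak limit of the nonlocal damping by Minty's monotonicity trick combined with the energy identity for the limit equation. The one genuine difference is in the Strichartz step. The paper splits $u^N=v^N+w^N$, with $v^N$ solving the linear damped equation and $w^N$ carrying the nonlinearity with zero initial data, and runs the bootstrap on $Y_N(t)=\n w^N\n_{L^4(0,t;L^{12})}$: there the superlinear term has the fixed exponent $4$ (after interpolation) with a coefficient $C_R$ that does not shrink, and smallness is instead achieved in the additive term because $\xi_{w^N}(0)=0$. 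You apply Strichartz directly to $u^n$ and exploit strict sub-quinticity: interpolating $L^{2(5-\kappa)}$ between $L^6$ and $L^{12}$ and using H\"older in time produces the factor $T^{\kappa/2}$ in front of $y^{4-2\kappa}$, so the coefficient, rather than the additive constant, becomes small as $T\to0$; Lemma \ref{Savostianov} (or direct absorption when $4-2\kappa\le1$) still closes the bootstrap, since its hypothesis only compares $\varepsilon$ with $\frac12(2C_0)^{-1/(\sigma-1)}$, which blows up as $C_0\to0$. This is a legitimate simplification that avoids the decomposition entirely, at the price of genuinely using $\kappa>0$ (the splitting is what one must do at the quintic endpoint $\kappa=0$, which is why the paper, following the quintic literature, keeps it). Two small points you should make explicit: the constant $C_T$ of Lemma \ref{lemma2.1} must stay bounded as $T\to0$ (the paper uses this implicitly by working on $t\le1$); and your claim that $f(u^n)\to f(u)$ strongly in $L^1(0,T;L^2)$ is more than you need and does not follow immediately from a.e.\ convergence plus the $L^{4/(4-2\kappa)}(0,T;L^2)$ bound --- weak convergence of $f(u^n)$ in $L^{6/5}(\Omega_T)$, which is what the paper uses, already suffices to pass to the limit in the equation, while the strong statement is essentially the compactness proved later in Lemma \ref{lem4.4}.
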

	\begin{proof}
		Let $F(s)=\int_{0}^{s}f(\tau)d\tau$. Choose $\mu_0\in\mathbb{R}^+\cap(-\mu,\lambda_1)$. By (\ref{1.3}), there exists $M>0$ such that
		\[\begin{cases}
			F(s)\geq -\frac{\lambda_1+\mu_0}{4}s^2-C,\ &|s|\geq M;\\
			|F(s)|\leq C,\ &|s|\leq M.
		\end{cases}\]
	Then
		\begin{equation}\label{3-3-1}
			\int_{\Omega}F(u)dx\geq-\frac{\lambda_1+\mu_0}{4}\int_{\Omega}|u|^2dx-C\geq-\frac{\lambda_1+\mu_0}{4\lambda_1}\n \nabla u\n^2-C.
		\end{equation}
	    \indent We first assume that $\n\xi_0\n_\e\leq R$ and obtain some a priori estimates to the solutions of problem (\ref{eq1.1}). Multiplying (\ref{eq1.1}) by $u_t$ in $L^2$ yields
		\begin{equation}\label{3-3-2}
			\frac{d}{dt}E(u,u_t)+k\n u_t\n^{p+2}=0,
		\end{equation}
		where 
		\[E(u,u_t)=\frac{1}{2}\n \xi_u\n_\e^2+\int_{\Omega}F(u)dx-(g,u).\]
		By (\ref{1.2}), integrating (\ref{3-3-2}) over $(0,t)$ gives
		\begin{equation}\label{3-3-3}
			E(u(t),u_t(t))\leq E(u_0,u_1)\leq \frac{1}{2}\n\xi_0\n_\e^2+C(1+\n u_0\n_{L^6}^6)+\frac{1}{2}(\n u_0\n^2+\n g\n^2)\leq C_R,\quad t\geq0.
		\end{equation}
		Using Poincar\'{e}'s inequality and Young inequality, we have
		\begin{equation}\label{3-3-4}
			|(g,u)|\leq \frac{\lambda_1-\mu_0}{8\lambda_1}\n\nabla u\n^2+C(\n g\n)
		\end{equation}
		Thus, we can deduce from (\ref{3-3-1}) and (\ref{3-3-4}) that
		\begin{equation}\label{3-3-5}
			E(u,u_t)\geq \frac{\lambda_1-\mu_0}{8\lambda_1}\n\xi_u\n_\e^2-C.
		\end{equation}
		Combining (\ref{3-3-3}) and (\ref{3-3-5}), we have
		\begin{equation}\label{3-3-6}
			\n\xi_u(t)\n_\e\leq C_R,\quad t\geq0.
		\end{equation}
		\indent We will use the so-called Galerkin method. Let $\lambda_1\leq \lambda_2\leq\cdots$ be the eigenvalues of the operator $-\Delta$ with homogeneous Dirichlet boundary conditions on $\Omega$ and $e_1,e_2,\cdots$ be the corresponding eigenfunctions. Let $P_N:L^2\rightarrow L^2$ be the orthoprojector to the linear subspace $span\{e_1,e_2,\cdots,e_N\}$. Then, the Galerkin approximations to problem (\ref{eq1.1}) are definded as follows:
		\begin{equation}\label{eq3.3}
			\begin{cases}
				u^N_{tt}-\Delta u^N+k\n u^N_t\n^pu^N_t+P_Nf(u^N)=P_Ng,\quad u^N\in P_N L^2,\\
				\xi_{u^N}(0)=\xi_0^N:=P_N\xi_0.
			\end{cases}
		\end{equation}
		Obviously, Eq. (\ref{eq3.3}) is locally uniquely solvable and the solutions $u^N(t,x)$ are $C^\infty$-smooth in $x$. Note that for all $N$, $\n\xi_0^N\n_\e\leq\n\xi_0\n_\e\leq R$. Thus estimate (\ref{3-3-6}) holds for $u^N$, that is
		\begin{equation}\label{3-3-8}
			\n \xi_{u^N}(t)\n_\e\leq C_R,\quad t\geq0.
		\end{equation}
		\indent In order to estimate the $L^4(L^{12})-$norm of $u^N$, we split the solution $u^N=v^N+w^N$ where $v^N$ solves
		\begin{equation}\label{eqv}
			v^N_{tt}-\Delta v^N+k\n u^N_t\n^pv^N_t=P_Ng,\quad \xi_{v^N}(0)=\xi_0^N,
		\end{equation}
		and $w^N$ solves
		\begin{equation}\label{eqw}
			w^N_{tt}-\Delta w^N+k\n u^N_t\n^pw^N_t=-P_Nf(v^N+w^N),\quad \xi_{w^N}(0)=0.
		\end{equation}
		 Multiplying (\ref{eqv}) by $v^N_t$ in $L^2$ yields
		\[\frac{1}{2}\left(\n\xi_{v^N}\n_\e^2+2(g,v^N_t)\right)+k\n u^N_t\n^p\cdot\n v^N_t\n^2=0,\]
		Integrating this equation over $(0,t)$ and using Young inequality, we have
		\begin{equation}\label{3-3-11}
			\n\xi_{v^N}(t)\n_\e\leq \sqrt{3}\n\xi_{v^N}(0)\n_\e+C(\n g\n)\leq C_R,\quad t\geq0.
		\end{equation}
		Due to Lemma \ref{lemma2.1} and estimates (\ref{3-3-8}) and (\ref{3-3-11}), we get for any $t\leq 1,$
		\begin{equation}\label{3-3-12}
			\begin{split}
				\n v^N\n_{L^4(0,t;L^{12})}&\leq C\left(\n\xi_0^N\n_\e+\int_{0}^t(\n P_Ng\n+k\n u^N_t\n^p\cdot\n v^N_t\n) d\tau\right)\\
				&\leq C(\n\xi_0\n_\e+t\cdot\n g\n+t\cdot kC_R)\\
				&\leq C_R
			\end{split}
		\end{equation}
		Then, using the interpolation inequality, we have
		\begin{equation}\label{3-3-13}
			\n v^N\n_{L^5(0,t;L^{10})}\leq C\n v^N\n^{\frac{4}{5}}_{L^4(0,t;L^{12})}\cdot\n v^N\n^\frac{1}{5}_{L^\infty(0,t;H^1_0)}\leq C_R,\quad 0\leq t\leq1.
		\end{equation}
		For the variable $w^N$, estimates (\ref{3-3-8}) and (\ref{3-3-11}) imply
		\begin{equation}\label{3-3-14}
			\n\xi_{w^N}(t)\n_{\e}\leq \n\xi_{u^N}(t)\n_{\e}+\n\xi_{v^N}(t)\n_{\e}\leq C_R,\quad t\geq0.
		\end{equation}
		Due to the growth condition (\ref{1.2}) and estimate (\ref{3-3-13}), we have for $0\leq t\leq1,$
		\begin{align*}
			\n P_Nf(v^N+w^N)\n_{L^1(0,t;L^{2})}&\leq\int_{0}^{t}\n f(v^N+w^N)\n d\tau\\
			&\leq C\int_{0}^{t}\left(1+ \left\n |v^N|^{5-\kappa}+|w^N|^{5-\kappa}\right\n\right) d\tau\\
			&\leq C\int_{0}^{t}(1+ \n v^N\n_{L^{10}}^{5-\kappa}+ \n w^N\n_{L^{10}}^{5-\kappa})d\tau\\
			&\leq C(t+t^\frac{\kappa}{5}\n v^N\n_{L^5(0,t;L^{10})})+C\n w^N\n^5_{L^5(0,t;L^{10})}\\
			&\leq C(t+t^\frac{\kappa}{5}C_R)+C\n w^N\n^5_{L^5(0,t;L^{10})}.
		\end{align*}
		Applying now Lemma \ref{lemma2.1} to Eq. (\ref{eqw}) and using (\ref{3-3-14}) and the interpolation inequality, we end up with
		\begin{equation}
			\begin{split}
				\n w^N\n_{L^4(0,t;L^{12})}&\leq C\left(\n P_Nf(v^N+w^N)\n_{L^1(0,t;L^{2})}+k\int_{0}^{t}\n u^N_t\n^p\cdot\n w^N_t\n) d\tau\right)\\
				&\leq C(t+t^\frac{\kappa}{5}C_R+t\cdot kC_R)+C\n w^N\n^5_{L^5(0,t;L^{10})}\\
				&\leq C(t+t^\frac{\kappa}{5}C_R+tC_R)+C_R\n w^N\n^4_{L^4(0,t;L^{12})}.
			\end{split}
		\end{equation}
		For any $\varepsilon>0$, take $T=T(\varepsilon,R)\leq 1$, such that
		\[C(t+t^\frac{\kappa}{5}C_R+tC_R)\leq\varepsilon,\quad \forall t\leq T.\]
		Let $Y_N(t)=\n w^N\n_{L^4(0,t;L^{12})}$, then $Y_N(t)$ is continuous on $t$ and 
		\[Y_N(t)\leq \varepsilon+C_R(Y_N(t))^4,\quad \forall t\in[0,T],\quad Y_N(0)=0.\]
		By Lemma \ref{Savostianov}, if we choose $\varepsilon<\frac{1}{2}(\frac{1}{2C_R})^\frac{1}{3}$, then
		\begin{equation}\label{3-3-16}
			Y_N(t)\leq 2\varepsilon,\quad \forall t\in[0,T].
		\end{equation}
		Combining estimates (\ref{3-3-8}), (\ref{3-3-12}) and (\ref{3-3-16}), we get for some $T=T(R)$,
		\begin{equation}\label{3-3-17}
			\n\xi_{u^N}\n_{L^\infty(0,T;\e)}+\n u^N\n_{L^4(0,T;L^{12})}\leq C_R.
		\end{equation}
		Denote $\Omega_T=\Omega\times[0,T]$. From growth condition (\ref{1.2}), we also have
		\begin{align*}
			&\n f(u^N)\n_{L^\frac{6}{5}(\Omega_T)}\leq C\left(\int_{0}^{T}(1+\n u^N\n^6_{L^6})d\tau\right)^\frac{5}{6}\leq C(T(1+C_R))^\frac{5}{6},\\
			&\left\n k\n u^N_t\n^pu_t^N\right\n_{L^2(\Omega_T)}\leq k\left(\int_{0}^{T}\n u^N_t\n^{2p+2}d\tau\right)^\frac{1}{2}\leq k(TC_R)^\frac{1}{2}.
		\end{align*}
		\indent Now, by Alaoglu's theorem and Lemma \ref{Aubin}, we can extract a subsequence, still denoted by $\{u^N\}$, such that
		\begin{align*}
			&\xi_{u^N}\rightarrow\xi_u\ weakly^*\ in\ L^\infty(0,T;\e)\ and\ strongly\ in\ C(0,T;L^2\times H^{-1});\\
			&u^N\rightarrow u\ weakly\ in\ L^4(0,T;L^{12});\\
			&f(u^N)\rightarrow f(u) \ weakly\ in\ L^\frac{6}{5}(\Omega_T)\ and \ P_Nf(u^N)\rightarrow f(u)\ weakly\ in\ L^\frac{6}{5}(\Omega_T);\\
			&k\n u^N_t\n^pu_t^N\rightarrow \eta\ weakly\ in\ L^2(\Omega_T).
		\end{align*}

        From Eq. (\ref{eq3.3}), we also have $u^N_{tt}\rightarrow u_{tt}$ $weakly^*$ in $L^\infty(0,T;H^{-1})$. Passing the limit $N\rightarrow\infty$ in Eq. (\ref{eq3.3}), we get
        \begin{equation}\label{3-3-18}
        	u_{tt}-\Delta u+\eta+f(u)=g,\quad in\ H^{-1}\ for\ a.e.\ t\in[0,T].
        \end{equation}
		We claim that $\eta=k\n u_t\n^pu_t$ in $L^2(\Omega_T)$, and hence $u(t),t\in[0,T]$ is a Shatah-Struwe solution of problem (\ref{eq1.1}) and estimate (\ref{3-3-0}) follows from (\ref{3-3-17}) and the lower semi-continuity of the norm of weakly and weakly* limit.\\
		\indent Indeed, for any $\varphi\in L^2(\Omega_T)$, by Lemma \ref{Zhao5},
		\begin{equation}\label{3-3-19}
			\int_{0}^{T}(k\n u^N_t\n^pu_t^N-k\n\varphi\n^p\varphi,u^N_t-\varphi)d\tau\geq\int_{0}^{T}kC_p\n u^N_t-\varphi\n^{p+2}d\tau\geq0.
		\end{equation}
		Obviously, (\ref{3-3-2}) holds for $u^N$, i.e.
		\begin{equation}\label{energyN}
			\frac{d}{dt}E(u^N,u^N_t)+k\n u^N_t\n^{p+2}=0,\ a.e.\ t\in[0,T],
		\end{equation} 
	    and we need to verify that the solution $u$ also satisfies a similar equality, that is
		\begin{equation}\label{energy}
			\frac{d}{dt}E(u,u_t)+(\eta,u_t)=0, \ a.e.\ t\in[0,T].
		\end{equation}
		 Since $u\in L^\infty(0,T;H_0^1)\cap L^4(0,T;L^{12})$, by growth condition (\ref{1.2}) and the interpolation inequality, we have
		\[\n f(u)\n_{L^1(0,T;L^2)}\leq C(T+\n u\n^5_{L^5(0,T;L^{10})})\leq C(T+\n u\n^4_{L^4(0,T;L^{12})}\n u\n_{L^\infty(0,T;H^1_0)})\leq C_R.\]
		Then, $u_t\in L^\infty(0,T;L^2)$ implies that $f(u)u_t\in L^1([0,T]\times\Omega)$, and we can approximate $u$ by smooth function and argue in a standard way to deduce that for almost all $t\in[0,T],$
		\begin{equation}\label{3-3-22}
			\frac{d}{dt}\int_{\Omega}F(u)dx=(f(u),u_t).
		\end{equation}
		Let $u^n(t)=P_nu(t)$ where $P_n$ is the othoprojector on the the subspace  $span\{e_1,e_2,\cdots,e_n\}$. Then, $u^n$ solves
		\[u^n_{tt}-\Delta u^n+P_n\eta+P_nf(u)=P_ng.\]
		For every $0\leq \tau\leq t\leq T$, multiplying this equation by $u^n_t$ and integrating on $\Omega\times[\tau,t]$, we get
		\begin{equation}\label{3-3-23}
				\frac{1}{2}\n\xi_{u^n}(t)\n_\e^2-\frac{1}{2}\n\xi_{u^n}(\tau)\n_\e^2+\int_{\tau}^{t}(\eta,u^n_t)ds
				+\int_{\tau}^{t}(f(u),u^n_t)ds=(g,u^n(t))-(g,u^n(\tau)).
		\end{equation}
	    Note that $\xi_{u^n}(t)\rightarrow\xi_{u}(t)$, $\xi_{u^n}(\tau)\rightarrow\xi_{u}(\tau)$ in $\e$, $u^n_t\rightarrow u_t$ in $L^2$ strongly for $a.e.\ t\in[0,T]$ and $f(u)\in L^1(0,T;L^2)$, we may pass to the limit $n\rightarrow\infty$ in (\ref{3-3-23}) and use Lebesgue dominated convergence theorem to obtain that
	    \[\frac{1}{2}\n\xi_{u}(t)\n_\e^2-\frac{1}{2}\n\xi_{u}(\tau)\n_\e^2
	    +\int_{\tau}^{t}(f(u),u_t)ds-(g,u(t))+(g,u(\tau))+\int_{\tau}^{t}(\eta,u_t)ds=0.\]
	    With the help of (\ref{3-3-22}), we have
	    \[E(u(t),u_t(t))-E(u(\tau),u_t(\tau))+\int_{\tau}^{t}(\eta,u_t)ds=0\]
	    holds for every $0\leq\tau\leq t\leq T.$ Thus, (\ref{energy}) is proved.
		On the other hand, by (\ref{3-3-1}), the Fatou lemma and the lower semi-continuity of the norm of weakly* limit,
		\begin{align*}
			\int_{\Omega}\left(F(u)+\frac{\lambda_1+\mu_0}{4}|u|^2\right)dx&\leq \liminf_{N\rightarrow\infty}\int_{\Omega}\left(F(u^N)+\frac{\lambda_1+\mu_0}{4}| u^N|^2\right)dx\\
			&\leq \liminf_{N\rightarrow\infty}\int_{\Omega}F(u^N)dx+\frac{\lambda_1+\mu_0}{4}\n u\n^2,
		\end{align*}
		\begin{align*}
			\n\xi_{u^N}\n_\e^2\leq \liminf_{N\rightarrow\infty}\n\xi_{u}\n_\e^2.
		\end{align*}
		Now, it follows from (\ref{energyN}), (\ref{energy}) and above estimates,
		\begin{align*}
			\limsup_{N\rightarrow\infty}\int_{0}^{T}(k\n u^N_t\n^pu_t^N,u^N_t)d\tau&=\limsup_{N\rightarrow\infty}\left(E(u_0^N,u_1^N)-E(u^N(T),u_t^N(T))\right)\\
			&=E(u_0,u_1)-\liminf_{N\rightarrow\infty}E(u^N(T),u_t^N(T))\\
			&\leq E(u_0,u_1)-E(u(T),u_t(T))\\
			&=\int_{0}^{T}(\eta,u_t)d\tau,
		\end{align*}
		\begin{align*}
			\lim_{N\rightarrow\infty}\int_{0}^{T}(k\n u^N_t\n^pu_t^N,\varphi)d\tau=\int_{0}^{T}(\eta,\varphi)d\tau,
		\end{align*}
		\begin{align*}
			\lim_{N\rightarrow\infty}\int_{0}^{T}(k\n \varphi\n^p\varphi,u_t^N)d\tau=\int_{0}^{T}(k\n \varphi\n^p\varphi,u_t)d\tau.
		\end{align*}
		Then, by (\ref{3-3-19}),
		\[0\leq \limsup_{N\rightarrow\infty}\int_{0}^{T}(k\n u^N_t\n^pu_t^N-k\n\varphi\n^p\varphi,u^N_t-\varphi)d\tau\leq \int_{0}^{T}(\eta-k\n\varphi\n^p\varphi,u_t-\varphi)d\tau.\]
		Taking $\varphi=u_t-\lambda\psi$ with $\lambda>0$ and $\psi\in L^2(\Omega_T)$, we get
		\[\int_{0}^{T}(\eta-k\n u_t-\lambda\psi\n^p(u_t-\lambda\psi),\psi)d\tau\geq 0.\]
		Leting $\lambda\rightarrow0^+$, we end up with
		\[\int_{0}^{T}(\eta-k\n u_t\n^pu_t,\psi)d\tau\geq 0,\quad \forall\psi\in L^2(\Omega_T).\]
		This means that $\eta=k\n u_t\n^pu_t$ in $L^2(\Omega_T)$ and the proof is complete.		
	\end{proof}
	
	\begin{remark}\label{remark3.4}
		Since (\ref{energy}) holds and $\eta=k\n u_t\n^pu_t$, we know that a Shatah-Struwe solution $u(t),t\in[0,T]$ satisfies the energy equality:
		\begin{equation}\label{3-4}
			\frac{d}{dt}E(u,u_t)+k\n u_t\n^{p+2}=0,\quad a.e.\ t\in[0,T],
		\end{equation}
		where
		\[E(u,u_t)=\frac{1}{2}\n \xi_u\n^2+\int_{\Omega}F(u)dx-(g,u).\]
		In particular, $\xi_u\in C(0,T;\e)$. Indeed, from (\ref{3-4}), we know that $t\rightarrow\n\xi_u(t)\n_\e$ is continuous. On the other hand, since $\xi_u\in L^\infty(0,T;\e)\cap C(0,T;L^2\times H^{-1})$ and $\e$ is a reflexive space, we have $\xi_u\in C_w(0,T;\e)$. Then, the uniform convexity of $\e$ gives that $\xi_u\in C(0,T;\e).$
	\end{remark}

	\begin{cor}\label{cor}
		Let the assumptions of Theorem \ref{thm3.3} hold. Then, for any initial data $\xi_0\in\e$, there exists a unique global Shatah-Struwe solution $u(t)$ of problem (\ref{eq1.1}) and we have the following estimate
		\begin{equation}\label{3-5-0}
			\n\xi_u(t)\n_\e+\n u\n_{L^4(t,t+1;L^{12})}\leq Q(\n\xi_0\n_\e),\quad t\geq0,
		\end{equation}
	    where the monotone function $Q$ is independent of $u$ and $t$.
	\end{cor}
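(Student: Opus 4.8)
The plan is to prove the three assertions in turn: uniqueness, global continuation, and the uniform-in-time estimate, using the local result of Theorem \ref{thm3.3}, the energy equality of Remark \ref{remark3.4}, and the monotonicity Lemma \ref{Zhao5}. For uniqueness, suppose $\uo$ and $\ut$ are two Shatah-Struwe solutions on $[0,T]$ with the same initial data and set $w=\uo-\ut$, which solves
\begin{equation*}
w_{tt}-\Delta w+k\left(\n\uo_t\n^p\uo_t-\n\ut_t\n^p\ut_t\right)+f(\uo)-f(\ut)=0 .
\end{equation*}
I would multiply by $w_t$ in $L^2$. The damping contributes the term $k\left(\n\uo_t\n^p\uo_t-\n\ut_t\n^p\ut_t,\ \uo_t-\ut_t\right)$, which by Lemma \ref{Zhao5} (applied with exponent $p+2\geq2$) is nonnegative and can be discarded, leaving
\begin{equation*}
\frac{d}{dt}\n\xi_w(t)\n_\e^2\leq 2\left|\left(f(\uo)-f(\ut),w_t\right)\right| .
\end{equation*}
Since the damping differences lie in $L^\infty(0,T;L^2)$ and $f(\uo)-f(\ut)$ will be shown to lie in $L^1(0,T;L^2)$, the source of the $w$-equation is in $L^1(0,T;L^2)$ while $\xi_w\in C(0,T;\e)$; the above identity can then be made rigorous by projecting onto $\mathrm{span}\{e_1,\dots,e_n\}$, establishing the identity for $P_nw$, and passing to the limit exactly as at the end of the proof of Theorem \ref{thm3.3}.

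The heart of the matter is the nonlinear term. By the mean value theorem and the growth condition (\ref{1.2}),
\begin{equation*}
|f(\uo)-f(\ut)|\leq C\left(1+|\uo|^{4-\kappa}+|\ut|^{4-\kappa}\right)|w| ,
\end{equation*}
and H\"older's inequality with exponents $3$ and $6$ together with the embedding $L^{12}\hookrightarrow L^{3(4-\kappa)}$ (valid since $0<\kappa\leq4$) and $H^1_0\hookrightarrow L^6$ yields
\begin{equation*}
\n f(\uo)-f(\ut)\n\leq C\left(1+\n\uo\n_{L^{12}}^{4-\kappa}+\n\ut\n_{L^{12}}^{4-\kappa}\right)\n\nabla w\n .
\end{equation*}
Setting $a(t)=C\left(1+\n\uo(t)\n_{L^{12}}^{4-\kappa}+\n\ut(t)\n_{L^{12}}^{4-\kappa}\right)$ and using Young's inequality, I arrive at $\frac{d}{dt}\n\xi_w\n_\e^2\leq a(t)\n\xi_w\n_\e^2$. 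The step I expect to be the main obstacle is showing $a\in L^1(0,T)$: this is where the sup-cubic Strichartz regularity is indispensable, since the weak-energy bound alone would not suffice. It follows from $\uo,\ut\in L^4(0,T;L^{12})$ and H\"older in time, because $\int_0^T\n\uo\n_{L^{12}}^{4-\kappa}\,dt\leq T^{\kappa/4}\n\uo\n_{L^4(0,T;L^{12})}^{4-\kappa}<\infty$. With $\xi_w(0)=0$ and $a\in L^1$, Gronwall's inequality forces $\n\xi_w(t)\n_\e\equiv0$, i.e. $\uo=\ut$.

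For global existence I would argue by continuation. By the energy equality (\ref{3-4}) the map $t\mapsto E(u,u_t)$ is non-increasing, and combined with the coercivity estimate (\ref{3-3-5}) this produces an a priori bound $\n\xi_u(t)\n_\e\leq Q(\n\xi_0\n_\e)$ that is uniform over the whole interval of existence. Since the local existence time in Theorem \ref{thm3.3} depends only on the norm of the data, a solution defined on a maximal interval $[0,T_{\max})$ with $T_{\max}<\infty$ could be restarted from a time close to $T_{\max}$, where the state still has energy-norm $\leq Q(\n\xi_0\n_\e)$, and extended beyond $T_{\max}$, contradicting maximality; hence $T_{\max}=\infty$, and the uniqueness just proved ensures the extension is consistent.

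Finally, for the uniform estimate (\ref{3-5-0}), the bound on $\n\xi_u(t)\n_\e$ is precisely the uniform energy bound above. For the Strichartz norm, I fix $R'=Q(\n\xi_0\n_\e)$ and let $\tau=T(R')\leq1$ be the local existence time from Theorem \ref{thm3.3} for data of size $R'$. On any interval $[t,t+1]$ I would cover by $\lceil 1/\tau\rceil$ subintervals of length at most $\tau$; on each, $u$ is the Shatah-Struwe solution emanating from a state of energy-norm $\leq R'$, so estimate (\ref{3-3-0}) bounds its $L^4(L^{12})$ norm by $Q(R')$. Summing over the finitely many subintervals, whose number depends only on $\n\xi_0\n_\e$, bounds $\n u\n_{L^4(t,t+1;L^{12})}$ by a function of $\n\xi_0\n_\e$ alone, independent of $t$, which completes the proof.
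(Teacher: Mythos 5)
Your argument is correct and follows essentially the same route as the paper: the energy equality plus the coercivity bound (\ref{3-3-5}) give the uniform energy estimate and hence global continuation, the monotonicity from Lemma \ref{Zhao5} kills the damping difference, and the Strichartz control of $\|u\|_{L^4(L^{12})}$ makes the Gronwall coefficient integrable for uniqueness (the paper simply bounds $|u|^{4-\kappa}$ by $1+|u|^4$ and applies the same H\"older splitting, so your $L^{3(4-\kappa)}$ variant is only a cosmetic difference). Your covering of $[t,t+1]$ by finitely many intervals of length $T(Q(\|\xi_0\|_\e))$ is exactly the implicit content of the paper's remark that (\ref{3-5-0}) follows from (\ref{3-3-0}) and (\ref{3.26}).
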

	\begin{proof}
		Suppose that $u(t), t\in[0,T]$ is a Shatah-Struwe solution with initial data $\xi_u(0)=\xi_0$. By Remark \ref{remark3.4}, $u$ satisfies the energy equality (\ref{3-4}). Integrating (\ref{3-4}) over $(0,t)$ yields
		\[E(u(t),u_t(t))\leq E(u_0,u_1)\leq Q_1(\n\xi_0\n_\e),\quad t\in[0,T]\]
		for some monotone function $Q_1$. Then, from (\ref{3-3-5}), we have
		\begin{equation}\label{3.26}
			\n\xi_{u}(t)\n_\e\leq Q_2(\n\xi_0\n_\e),\quad t\in[0,T]
		\end{equation}
	    for some monotone function $Q_2$ which is independent of $u$ and $t$. 
		According to this estimate, the energy norm $\n\xi_u(t)\n_\e$ cannot blow up in a finite time. Therefore, the local Shatah-Struwe solution $u(t)$ can be extended globally in time and the estimate (\ref{3-5-0}) follows from (\ref{3-3-0}) and (\ref{3.26}). We remain to show the uniqueness of Shatah-Struwe solution.\\
		\indent Let $u(t),v(t),t\in[0,T]$ be two Shatah-Struwe solutions of problem (\ref{eq1.1}) with initial data $\n\xi_u(0)\n_\e$, $\n\xi_v(0)\n_\e\leq R$. Then, 
		\[\n u\n_{L^4(0,T;L^{12})}+\n v\n_{L^4(0,T;L^{12})}\leq C(R,T).\]
		Let $w(t)=u(t)-v(t)$, then $w$ solves
		\[w_{tt}-\Delta w+k\n u_t\n^pu_t-\n v_t\n^pv_t+f(u)-f(v)=0.\]
		Multiplying this equation by $w_t$ and integrating over $x\in\Omega$(which can be justified according to Remark \ref{remark3.4}), we get
		\begin{equation}\label{3-5-1}
			\frac{1}{2}\frac{d}{dt}\n\xi_w(t)\n_\e^2+(k\n u_t\n^pu_t-\n v_t\n^pv_t,w_t)+(f(u)-f(v),w_t)=0.
		\end{equation}
		By Lemma \ref{Zhao5} and condition (\ref{1.2}),
		\begin{equation}\label{3-5-2}
			(k\n u_t\n^pu_t-\n v_t\n^pv_t,w_t)\geq kC_p\n u_t-v_t\n^{p+2}\geq0,
		\end{equation}
		\begin{equation}\label{3-5-3}
			\begin{split}
				|(f(u)-f(v),w_t)|&\leq C((1+|u|^4+|v|^4)|w|,|w_t|)\\
				&\leq C(1+\n u\n^4_{L^{12}}+\n v\n^4_{L^{12}})\n w\n_{L^6}\n w_t\n\\
				&\leq C(1+\n u\n^4_{L^{12}}+\n v\n^4_{L^{12}})\n \xi_w\n_\e^2.
			\end{split}
		\end{equation}
		Inserting (\ref{3-5-2}) and (\ref{3-5-3}) into (\ref{3-5-1}), we have
		\[\frac{1}{2}\frac{d}{dt}\n\xi_w\n_\e^2\leq C(1+\n u\n^4_{L^{12}}+\n v\n^4_{L^{12}})\n \xi_w\n_\e^2.\]
		Applying the Gronwall inequality, we get
		\[\n\xi_w(t)\n_\e^2\leq \n\xi_w(0)\n_\e^2\exp\left\{C\int_{0}^{t}(1+\n u\n^4_{L^{12}}+\n v\n^4_{L^{12}})d\tau\right\}\leq C(R,T)\n\xi_u(0)-\xi_v(0)\n_\e^2,\quad t\in[0,T].\]
		Therefore, $\xi_w(t)\equiv0$ on $[0,T]$ if the initial data $\xi_u(0)=\xi_v(0)$ and the proof is complete.
	\end{proof}
	
	We now define the solution semigroup $S(t):\e\rightarrow\e$ associated with problem (\ref{eq1.1}):
	\[S(t)\xi_0:=\xi_u(t),\]
	where $u(t)$ is the unique Shatah-Struwe solution $u(t)$ of problem (\ref{eq1.1}) corresponding to initial data $\xi_0$. Moreover, according to the proof of Corollary \ref{cor}, $S(t):\e\rightarrow\e$ is locally Lipschitz continuous.

\section{Global attractor}
    
    \begin{thm}\label{thm4.1}
    	Under the conditions (\ref{1.2}) and (\ref{1.3}), the dynamical system $(\e,S(t))$ is dissipative.
    \end{thm}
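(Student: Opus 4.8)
The plan is to produce a bounded absorbing set, i.e. a bounded $\mathcal{B}_0\subset\e$ such that for every bounded $B\subset\e$ there is $t_0=t_0(B)$ with $S(t)B\subset\mathcal{B}_0$ for all $t\geq t_0$. The starting point is the energy equality from Remark \ref{remark3.4}, $\frac{d}{dt}E(u,u_t)+k\n u_t\n^{p+2}=0$, together with the coercivity bound (\ref{3-3-5}), $E(u,u_t)\geq\frac{\lambda_1-\mu_0}{8\lambda_1}\n\xi_u\n_\e^2-C$. Along any trajectory $E(u(t),u_t(t))$ is therefore non-increasing and bounded below, hence convergent to some $E_\infty\geq-C$, and integrating the energy equality gives $k\int_0^\infty\n u_t\n^{p+2}\,dt=E(u_0,u_1)-E_\infty<\infty$. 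Since the trajectory is bounded by (\ref{3.26}), say $\n\xi_u(t)\n_\e\leq C_R$, this upgrades to $\int_0^\infty\n u_t\n^{2p+2}\,dt<\infty$ as well.

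Next I would introduce the perturbed functional $\Lambda_\epsilon(t)=E(u,u_t)+\epsilon(u,u_t)$, which for small $\epsilon$ is comparable to the energy through (\ref{3-3-5}). Multiplying (\ref{eq1.1}) by $u$ and using $\frac{d}{dt}(u,u_t)=\n u_t\n^2+(u_{tt},u)$ together with the equation yields
\[\frac{d}{dt}\Lambda_\epsilon=-k\n u_t\n^{p+2}+\epsilon\n u_t\n^2-\epsilon\n\nabla u\n^2-\epsilon k\n u_t\n^p(u,u_t)-\epsilon(f(u),u)+\epsilon(g,u).\]
Three ingredients then close the estimate. First, from the dissipation condition (\ref{1.3}), arguing exactly as for (\ref{3-3-1}), one has $(f(u),u)\geq-\mu_0\n u\n^2-C$ with $\mu_0<\lambda_1$, so that $-\epsilon(f(u),u)$ combines with $-\epsilon\n\nabla u\n^2$ via Poincar\'e's inequality into a strictly negative multiple of $\n\nabla u\n^2$; the term $\epsilon(g,u)$ is absorbed by Young's inequality into $\n\nabla u\n^2$ and a constant. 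Second, the elementary inequality $-kx^{p+2}+\epsilon x^2\leq C_0$ for $x\geq0$, with $C_0=c\,\epsilon^{1+2/p}$ independent of the initial data, disposes of the two ``pure $u_t$'' terms. Third, Young's inequality handles the cross term, $\epsilon k\n u_t\n^p(u,u_t)\leq(\text{small})\n\nabla u\n^2+D\n u_t\n^{2p+2}$, pushing the excess onto the time-integrable quantity from the first paragraph. Altogether, for one fixed sufficiently small $\epsilon$,
\[\frac{d}{dt}\Lambda_\epsilon+\beta\n\nabla u\n^2\leq C_0+D\n u_t\n^{2p+2},\qquad\beta>0,\]
where $C_0$ is small and uniform in the data.

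To finish, I would integrate on $[0,t]$, use $\Lambda_\epsilon\geq-C$ and the integrability bound to obtain $\beta\int_0^t\n\nabla u\n^2\leq\Lambda_\epsilon(0)+C+C_0t+D\int_0^\infty\n u_t\n^{2p+2}$, and divide by $t$: letting $t\to\infty$ gives $\limsup_{t\to\infty}\frac1t\int_0^t\n\nabla u\n^2\leq C_0/\beta$, which by the smallness of $\epsilon$ is at most $1$. Hence the set of times with $\n\nabla u(t)\n^2\leq2$ has density at least $\tfrac12$, while $\{\,t:\ \n u_t(t)\n>\delta\,\}$ has finite measure for each $\delta>0$; intersecting, one extracts $t_n\to\infty$ with $\n\nabla u(t_n)\n^2\leq2$ and $\n u_t(t_n)\n\to0$. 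Using $F(u)\leq C(1+\n u\n_{L^6}^6)\leq C(1+\n\nabla u\n^6)$ and Poincar\'e's inequality, $E(u(t_n),u_t(t_n))\leq R_0$ for a constant $R_0$ independent of the data; monotonicity then forces $E_\infty=\lim_{t}E\leq\liminf_n E(u(t_n),u_t(t_n))\leq R_0$, so there is $t_0$ with $E(t)\leq R_0+1$ for $t\geq t_0$, and (\ref{3-3-5}) yields $\n\xi_u(t)\n_\e^2\leq R_1^2$ for $t\geq t_0$. The ball of radius $R_1$ in $\e$ is the desired absorbing set.

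The hard part is precisely the nonlocal \emph{weak} damping: multiplying by $u_t$ produces only $-k\n u_t\n^{p+2}$ rather than a term $-\gamma\n u_t\n^2$, and when $\n u_t\n$ is small the former is far weaker, so the usual inequality $\frac{d}{dt}\Lambda_\epsilon+\delta\Lambda_\epsilon\leq C$ (and hence exponential decay to the absorbing set) is simply unavailable. My remedy is to accept a differential inequality with a uniformly small constant plus a time-integrable remainder, and then to convert the resulting time-averaged smallness of $\n\nabla u\n$ into genuine pointwise smallness of the monotone energy $E$ along a carefully chosen sequence of times. This density/monotonicity step replaces Gronwall's lemma and is where the entire weak-damping difficulty is concentrated; everything else is routine energy bookkeeping.
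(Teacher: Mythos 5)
Your route is genuinely different from the paper's: the paper does not actually prove Theorem \ref{thm4.1} but simply invokes the proof of Theorem 3.1 of \cite{Zhao1} (taking $n=3$, $K\equiv 0$ and replacing the growth condition there by (\ref{1.2})), so a self-contained argument such as yours is a real addition. Your strategy --- monotonicity of the energy plus $\int_0^\infty\n u_t\n^{p+2}\,dt<\infty$ from the energy identity, the perturbed functional $\Lambda_\epsilon=E+\epsilon(u,u_t)$ yielding a differential inequality with a small data-independent constant and a time-integrable remainder, and then a density/monotonicity argument in place of Gronwall --- is sound, and it correctly identifies where the weak damping hurts: since $-k\n u_t\n^{p+2}$ degenerates as $\n u_t\n\to0$, no inequality of the form $\frac{d}{dt}\Lambda_\epsilon+\delta\Lambda_\epsilon\leq C$ is available, and some substitute for exponential absorption is unavoidable.

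Two points need repair before this stands as a complete proof. First, the claim that $C_0/\beta\le 1$ ``by the smallness of $\epsilon$'' is false: the good term $-\beta\n\nabla u\n^2$ is itself produced by the $\epsilon$-perturbation, so $\beta=O(\epsilon)$, while $C_0$ contains, besides the $O(\epsilon^{1+2/p})$ term, the $O(\epsilon)$ contributions coming from $-\epsilon(f(u),u)\le\epsilon\mu_0\n u\n^2+\epsilon C$ and from $\epsilon(g,u)$; hence $C_0/\beta$ tends to a fixed positive constant $K$ (depending on $f$, $g$, $\lambda_1$, but not on the data) as $\epsilon\to0$, and is not small. This does not break the argument --- all you need is that $K$ is data-independent, and the same density reasoning applies verbatim to the set $\{t:\n\nabla u(t)\n^2\le 2K\}$ --- but the threshold $2$ must be replaced by $2K$ and $R_0$ adjusted accordingly. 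Second, dissipativity requires the entering time into the absorbing ball to be uniform over each bounded set $B$, whereas you produce $t_0$ trajectory by trajectory via the convergence $E(t)\to E_\infty$, which has no uniform rate. The fix is already contained in your estimates: every constant in the integrated inequality ($\Lambda_\epsilon(0)$, $\int_0^\infty\n u_t\n^{2p+2}\,dt$, and the Chebyshev bound on $|\{t:\n u_t\n>\delta\}|$) is controlled by $R=\sup_{\xi\in B}\n\xi\n_\e$ alone, so there is $T(R)$ such that for $t\ge T(R)$ the set $\{s\in[0,t]:\n\nabla u(s)\n^2\le 4K,\ \n u_t(s)\n\le\delta\}$ has measure at least $t/4$ and is in particular nonempty; the first good time $s_*$ is therefore at most $T(R)$, and the monotonicity of $E$ then gives $E(u(t),u_t(t))\le R_0$ for all $t\ge T(R)$, uniformly over $B$. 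With these two corrections the proof is complete.
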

    \begin{proof}
    	Following exactly from the proof process of Theorem 3.1 in \cite{Zhao1}, where we take $n=3$, $K(x,y)\equiv0$ and replace the growth condition (1.4) in \cite{Zhao1} with condition (\ref{1.2}) in this paper, we can obtain the dissipativity of $(\e,S(t))$.
    \end{proof}
    
     Now, we are ready to verify the asymptotical compactness of $(\e,S(t))$ by applying the contractive function method.  
    \begin{Def}\cite{Chueshov2015,Chueshov2008}
    	Let $X$ be a Banach space, $B\subset X$ be a bounded subset. A function $\Psi:X\times X\rightarrow\mathbb{R}$ is said to be a contractive function on $B\times B$ if for any sequence $\{x_n\}_{n=1}^\infty\subset B$, there is a subsequence $\{x_{n_k}\}_{n=1}^\infty\subset \{x_n\}_{n=1}^\infty$ such that
    	\[\liminf_{k\rightarrow\infty}\liminf_{l\rightarrow\infty}\Psi(x_{n_k},x_{n_l})=0.\]
    	Denote the set of all contractive functions on $B\times B$ by $\mathcal{C}(B)$.
    \end{Def}
    
    \begin{lemma}\cite{Chueshov2015,Chueshov2008}\label{lem4.2}
    	Let $\{S(t)\}_{t\geq0}$ be a continuous semigroup on a Banach space $(X,\n\cdot\n)$. If for any positively invariant bounded set $B\subset X$ and for any $\varepsilon>0$, there exist $T=T(\varepsilon,B)$ and a contractive function $\Psi_{\varepsilon,T,B}\in\mathcal{C}(B)$ such that
    	\[\n S(T)y_1-S(T)y_2\n \leq \varepsilon+\Psi_{\varepsilon,T,B}(y_1,y_2),\quad \forall y_1,y_2\in B.\]
    	Then, $\{S(t)\}_{t\geq0}$ is asymptotically compact.
    \end{lemma}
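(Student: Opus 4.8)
The plan is to reduce the statement to a sequential precompactness property for orbits and then to extract a single convergent subsequence by a Cantor-type diagonal selection, feeding the contractive-function hypothesis in at a sequence of scales $\varepsilon_m\to 0$. Concretely, I would first invoke the standard characterization that $\{S(t)\}_{t\geq0}$ is asymptotically compact provided that for every bounded positively invariant $B\subset X$, every sequence $\{x_n\}\subset B$ and every $t_n\to+\infty$, the sequence $z_n:=S(t_n)x_n$ has a convergent subsequence. So it suffices to fix such $B$, $\{x_n\}$ and $t_n\to\infty$, and to produce one Cauchy subsequence of $\{z_n\}$; convergence then follows since $X$ is complete.

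The central device is the semigroup property combined with positive invariance. For a fixed $T$ and all $n$ with $t_n\geq T$, write $z_n=S(T)y_n$ with $y_n:=S(t_n-T)x_n$, and note $y_n\in B$ because $B$ is positively invariant. Taking $T=T(\varepsilon,B)$ and the associated $\Psi=\Psi_{\varepsilon,T,B}\in\mathcal{C}(B)$, the hypothesis of the lemma gives $\n z_n-z_m\n\leq\varepsilon+\Psi(y_n,y_m)$ for all admissible $n,m$, while the defining property of $\mathcal{C}(B)$ yields a subsequence of $\{y_n\}$ along which $\liminf_k\liminf_l\Psi=0$. Hence along that subsequence
\[
\liminf_{k\to\infty}\liminf_{l\to\infty}\n z_{n_k}-z_{n_l}\n\leq\varepsilon .
\]
This is the one-scale estimate, and positive invariance is exactly what keeps the preimages $y_n$ inside $B$ so that $\mathcal{C}(B)$ is applicable.

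I would then iterate at the scales $\varepsilon_m=2^{-m}$, maintaining a nested chain of infinite index sets $I_0\supset I_1\supset\cdots$. At step $m$ I apply the one-scale estimate to the subsequence indexed by $I_{m-1}$ (the tail $\{S(t_j-T_m)x_j\}_{j\in I_{m-1}}$ still lies in $B$, so $\mathcal{C}(B)$ applies with $T_m,\Psi_m$); from $\liminf_k\liminf_l\n z_{n_k}-z_{n_l}\n\leq\varepsilon_m$ I select a distinguished index $a^\ast$ and an infinite set $I_m\subset I_{m-1}$ so that the frozen point $\hat z_m:=z_{a^\ast}$ satisfies $\n\hat z_m-z_j\n\leq 3\varepsilon_m$ for every $j\in I_m$. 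Since the index of each later distinguished point $\hat z_{m'}$ (with $m'>m$) lies in $I_{m'-1}\subset I_m$, this forces $\n\hat z_m-\hat z_{m'}\n\leq 3\varepsilon_m\to0$, so $\{\hat z_m\}$ is Cauchy and converges, completing the proof.

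The step I expect to be the main obstacle is precisely this extraction, because $\mathcal{C}(B)$ only controls a double $\liminf$ — it makes some, not all, pairs close — so a naive diagonal argument over a single fixed subsequence fails. The resolution is the two features highlighted above: re-invoking $\mathcal{C}(B)$ at \emph{every} scale on the fresh tail sequence $S(t_j-T_m)x_j\in B$ (which is where positive invariance of $B$ is indispensable), and freezing at each scale one representative that is within $3\varepsilon_m$ of an entire infinite remaining index set, so that all subsequently chosen representatives automatically inherit that closeness.
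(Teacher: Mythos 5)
The paper does not prove this lemma at all --- it is quoted from Chueshov--Lasiecka \cite{Chueshov2008} and Chueshov \cite{Chueshov2015} as a known abstract criterion --- so there is no internal proof to compare against. Your argument is a correct, self-contained reconstruction of the standard proof: the reduction via $z_n=S(T)S(t_n-T)x_n$ with $S(t_n-T)x_n\in B$ by positive invariance, the one-scale bound $\liminf_k\liminf_l\n z_{n_k}-z_{n_l}\n\leq\varepsilon$, and the multi-scale extraction with nested infinite index sets $I_m$ and frozen representatives $\hat z_m$ within $3\varepsilon_m$ of all of $I_m$ are exactly the right devices, and your diagnosis that the double $\liminf$ is the crux (it controls only some pairs, so a naive diagonal fails) is accurate. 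The only point to make explicit is that the distinguished indices $a^\ast_m$ should be chosen strictly increasing (possible at each stage since infinitely many admissible $k^\ast$ are available), so that $\{\hat z_m\}$ is genuinely a subsequence of $\{z_n\}$; with that one-line addition the proof is complete.
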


    \indent To establish the main result in this section, we need the following result.
    \begin{lemma}\label{lem4.4}
    	Under the condition (\ref{1.2}), the operator
    	\begin{align*}
    		T_f:\mathcal{M}&\longrightarrow L^1(0,T;L^2)\\
    		&u\longmapsto f(u)
    	\end{align*}
    	is compact, where $\mathcal{M}=\{u\in L^\infty(0,T;H^1_0)\cap L^4(0,T;L^{12})|u_t\in L^\infty(0,T;L^2)\}$.
    \end{lemma}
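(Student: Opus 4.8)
The plan is to verify sequential compactness directly: given any sequence $\{u_n\}$ that is bounded in $\mathcal{M}$, I will extract a subsequence along which $f(u_n)$ converges strongly in $L^1(0,T;L^2)$. Since $\{u_n\}$ is bounded in $L^\infty(0,T;H^1_0)$ with $\{u_{n,t}\}$ bounded in $L^\infty(0,T;L^2)$, I would apply the Aubin--Dubinskii--Lions lemma (Lemma \ref{Aubin}(ii)) to the triple $H^1_0\hookrightarrow\hookrightarrow L^q\hookrightarrow L^2$, which in three dimensions is valid for every $2\le q<6$ by the Rellich--Kondrachov theorem. This yields, after passing to a subsequence, a limit $u$ with $u_n\to u$ strongly in $C(0,T;L^q)$. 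Simultaneously, weak-$*$ compactness gives $u_n\rightharpoonup u$ weakly in $L^4(0,T;L^{12})$ and weakly-$*$ in $L^\infty(0,T;H^1_0)$, so that $u$ inherits the same uniform bounds as the sequence. The exponent I actually need is $q=\tfrac{12}{2+\kappa}$, which lies in $[2,6)$ exactly when $0<\kappa\le 4$.

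Next I would estimate $f(u_n)-f(u)$ pointwise. By the mean value theorem together with the growth bound (\ref{1.2}),
\[ |f(u_n)-f(u)|\le C\bigl(1+|u_n|^{4-\kappa}+|u|^{4-\kappa}\bigr)\,|u_n-u|. \]
Taking $L^2(\Omega)$ norms, squaring, and applying Hölder's inequality in space to the cross term $\int_\Omega |u_n|^{2(4-\kappa)}|u_n-u|^2\,dx$ with the conjugate pair $\tfrac{6}{4-\kappa},\tfrac{6}{2+\kappa}$, I obtain
\[ \Vert f(u_n)-f(u)\Vert\le C\Vert u_n-u\Vert+C\bigl(\Vert u_n\Vert_{L^{12}}^{4-\kappa}+\Vert u\Vert_{L^{12}}^{4-\kappa}\bigr)\Vert u_n-u\Vert_{L^{12/(2+\kappa)}}. \]
The crucial point is that the intermediate exponent $\tfrac{12}{2+\kappa}$ is strictly below the critical Sobolev exponent $6$ precisely because $\kappa>0$, so the strong convergence $u_n\to u$ in $C(0,T;L^{12/(2+\kappa)})$ supplied by the first step is indeed available.

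Integrating in time, the first term tends to $0$ since $u_n\to u$ uniformly in $L^2$. For the second term I would use Hölder's inequality in time with the conjugate exponents $\tfrac{4}{4-\kappa}$ and $\tfrac{4}{\kappa}$: the factor $\Vert u_n\Vert_{L^{12}}^{4-\kappa}$ is bounded in $L^{4/(4-\kappa)}(0,T)$ by the uniform $L^4(0,T;L^{12})$ bound, while $\Vert u_n-u\Vert_{L^{12/(2+\kappa)}}\le\Vert u_n-u\Vert_{C(0,T;L^{12/(2+\kappa)})}$ tends to $0$ in $L^{4/\kappa}(0,T)$ (the boundary case $\kappa=4$ is trivial, since the high-order factor disappears and only the first term survives). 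Hence $\int_0^T\Vert f(u_n)-f(u)\Vert\,dt\to 0$, which is exactly strong convergence of $f(u_n)$ to $f(u)$ in $L^1(0,T;L^2)$, establishing the compactness of $T_f$.

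The main obstacle is the exponent bookkeeping in the second and third paragraphs: one must choose the spatial Hölder split so that the high-power factor lands exactly in $L^{12}$, where uniform bounds are available, and the remaining factor lands in a subcritical space $L^r$ with $r<6$, where compact embedding furnishes strong convergence, and then confirm that the residual time-integrability $\Vert u_n\Vert_{L^{12}}^{4-\kappa}\in L^{4/(4-\kappa)}(0,T)$ pairs, via Hölder in time, with the uniform $L^4(0,T;L^{12})$ bound on the difference. The sup-cubic growth sits exactly at the borderline of this scheme, and it is the strict positivity of $\kappa$ that keeps every spatial and temporal exponent inside the admissible range.
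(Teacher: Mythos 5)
Your proof is correct and takes essentially the same route as the paper: the identical H\"older split in space sending the high-order factor to $L^{12}$ (controlled by the $L^4(0,T;L^{12})$ bound) and the difference to the subcritical space $L^{12/(\kappa+2)}$, combined with the Aubin--Dubinskii--Lions compact embedding $\mathcal{M}\hookrightarrow\hookrightarrow C(0,T;L^{12/(\kappa+2)})$. The only cosmetic difference is that you argue via sequential compactness with the time-H\"older step written out explicitly, whereas the paper packages the same estimate as a Lipschitz-type bound for arbitrary $u,v\in\mathcal{M}$ and then invokes the compact embedding.
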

    \begin{proof}
    	For any $u,v\in \mathcal{M}$, by the growth condition (\ref{1.2}) we have
    	\begin{align*}
    		\n f(u)-f(v)\n_{L^1(0,T;L^{2})}&=\int_{0}^{T}\n (f(u)-f(v))\n dt\\
    		&\leq C\int_{0}^{T}\left\n (1+|u|^{4-\kappa}+|v|^{4-\kappa})|u-v|\right\n dt\\
    		&\leq C\int_{0}^{T}\left\n 1+|u|^{4-\kappa}+|v|^{4-\kappa}\right\n_{L^\frac{12}{4-\kappa}}\n u-v\n_{L^\frac{12}{\kappa+2}} dt\\
    		&\leq C\int_{0}^{T}\left(1+\n u\n^{4-\kappa}_{L^{12}}+\n v\n^{4-\kappa}_{L^{12}}\right)\n u-v\n_{L^\frac{12}{\kappa+2}}dt\\
    		&\leq C(T+\n u\n_{L^4(0,T;L^{12})}+\n v\n_{L^4(0,T;L^{12})})\cdot\sup_{t\in[0,T]}\n u-v\n_{L^\frac{12}{\kappa+2}}.
    	\end{align*}
    	Since $2\leq\frac{12}{\kappa+2}<6$, we have $H_0^1\hookrightarrow\hookrightarrow L^\frac{12}{\kappa+2}\hookrightarrow L^2$. Using Lemma \ref{Aubin}, we know that $\mathcal{M}\hookrightarrow\{u\in L^\infty(0,T;H^1_0)|u_t\in L^\infty(0,T;L^2)\}\hookrightarrow\hookrightarrow C(0,T;L^\frac{12}{\kappa+2})$. Therefore, $T_f$ is a compact operator.
    \end{proof}
 
    \begin{thm}\label{thm4.5}
    	Under the conditions (\ref{1.2}) and (\ref{1.3}), the dynamical system $(\e,S(t))$ is asymptotically compact.
    \end{thm}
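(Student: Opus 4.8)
The plan is to verify the hypothesis of the contractive function method, Lemma \ref{lem4.2}. Fix a positively invariant bounded set $B\subset\e$ with $\n\xi\n_\e\le R$ for all $\xi\in B$, and fix $\varepsilon>0$. For $y_1,y_2\in B$ let $u,v$ be the corresponding global Shatah-Struwe solutions furnished by Corollary \ref{cor}, and set $w=u-v$, so that
\[w_{tt}-\Delta w+k(\n u_t\n^pu_t-\n v_t\n^pv_t)+f(u)-f(v)=0,\qquad \xi_w(0)=y_1-y_2.\]
Writing $E_w(t)=\frac12\n\xi_w(t)\n_\e^2$, I will combine the identity obtained by testing with $w_t$ with the auxiliary identity obtained by testing with $w$ (both justified as in Remark \ref{remark3.4}) to bound $\n\xi_w(T)\n_\e$ by $\varepsilon$ plus a contractive remainder, exactly as required by Lemma \ref{lem4.2}.

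Testing with $w_t$ and using Lemma \ref{Zhao5} to discard the nonnegative damping contribution $k(\n u_t\n^pu_t-\n v_t\n^pv_t,w_t)\ge0$, integration over $[s,T]$ gives $E_w(T)\le E_w(s)+\int_0^T|(f(u)-f(v),w_t)|\,d\tau$ for every $s\in[0,T]$; averaging in $s$ yields
\begin{equation*}
E_w(T)\le\frac1T\int_0^TE_w(s)\,ds+\int_0^T|(f(u)-f(v),w_t)|\,d\tau .
\end{equation*}
Next, testing with $w$, integrating over $[0,T]$ and solving for $\int_0^T\n\nabla w\n^2$ shows that $\int_0^TE_w(s)\,ds=\int_0^T\n w_t\n^2\,ds+R_1$, where $R_1$ collects the boundary term $(w_t,w)\big|_0^T$ together with $\int_0^Tk(\n u_t\n^pu_t-\n v_t\n^pv_t,w)\,ds$ and $\int_0^T(f(u)-f(v),w)\,ds$. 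Substituting, I arrive at
\begin{equation*}
E_w(T)\le\frac1T\int_0^T\n w_t\n^2\,ds+\frac1T R_1+\int_0^T|(f(u)-f(v),w_t)|\,d\tau .
\end{equation*}

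The hard part is the first term, where the degeneracy of the nonlocal weak damping is felt: the dissipation only controls the higher power $\n w_t\n^{p+2}$, never $\n w_t\n^2$, so no pointwise decay of $\n w_t\n$ is available. The resolution is an averaging estimate. From the energy equality (\ref{3-4}) and the lower bound (\ref{3-3-5}), every solution starting in $B$ satisfies $k\int_0^\infty\n u_t\n^{p+2}\,dt\le E(u_0,u_1)+C\le C_R$ uniformly, whence $\int_0^T\n w_t\n^{p+2}\,ds\le 2^{p+1}\int_0^T(\n u_t\n^{p+2}+\n v_t\n^{p+2})\,ds\le C_R$. Since $x\mapsto x^{(p+2)/2}$ is convex, Jensen's inequality gives
\begin{equation*}
\frac1T\int_0^T\n w_t\n^2\,ds\le\Big(\frac1T\int_0^T\n w_t\n^{p+2}\,ds\Big)^{\frac{2}{p+2}}\le\Big(\frac{C_R}{T}\Big)^{\frac{2}{p+2}} ,
\end{equation*}
so this term is $\le\varepsilon$ once $T=T(\varepsilon,R)$ is chosen large enough (note that this polynomial rate in $T$ is the decay that will reappear for the polynomial attractor).

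It remains to check that $\Psi_{\varepsilon,T,B}(y_1,y_2):=\frac1T R_1+\int_0^T|(f(u)-f(v),w_t)|\,d\tau$ belongs to $\mathcal{C}(B)$. Given $\{y_n\}\subset B$, the solutions are uniformly bounded in $\mathcal{M}$ by (\ref{3-5-0}); by Lemma \ref{lem4.4} together with Lemma \ref{Aubin} I extract a subsequence along which $f(u^{n_k})$ converges in $L^1(0,T;L^2)$ and $u^{n_k}$ converges in $C(0,T;L^2)$. Each term of $\Psi(y_{n_k},y_{n_l})$ is then a bounded factor (controlled by $R$ through (\ref{3-5-0})) times a factor tending to zero; for instance $\int_0^T|(f(u^{n_k})-f(u^{n_l}),(u^{n_k})_t-(u^{n_l})_t)|\,d\tau\le\n f(u^{n_k})-f(u^{n_l})\n_{L^1(0,T;L^2)}\sup_t\n(u^{n_k})_t-(u^{n_l})_t\n\to0$, while the boundary and $w$-paired terms in $R_1$ vanish because $\sup_t\n u^{n_k}-u^{n_l}\n\to0$. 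Hence $\liminf_k\liminf_l\Psi(y_{n_k},y_{n_l})=0$, so $\Psi\in\mathcal{C}(B)$. Combining the displays gives $\n\xi_w(T)\n_\e^2=2E_w(T)\le2\varepsilon+2\Psi$, and taking square roots produces the form $\n S(T)y_1-S(T)y_2\n_\e\le\varepsilon'+\Psi'$ with $\varepsilon'$ arbitrarily small and $\Psi'$ still contractive. Lemma \ref{lem4.2} then yields the asymptotic compactness of $(\e,S(t))$. The single genuine obstacle is the passage from the degenerate $\n w_t\n^{p+2}$-dissipation to a true decay of the average $\frac1T\int_0^T\n w_t\n^2\,ds$, which the Jensen/averaging argument settles.
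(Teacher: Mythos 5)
Your proof is correct, and it follows the same overall skeleton as the paper's: the contractive function criterion of Lemma \ref{lem4.2}, the two energy identities obtained by testing the difference equation with $w_t$ and with $w$, the averaging in time to trade $E_w(T)$ for $\frac1T\int_0^T E_w$, and the compactness of $T_f$ from Lemma \ref{lem4.4} to show the remainder is contractive. The one place where you genuinely diverge is the step you correctly identify as the crux, namely the control of $\frac1T\int_0^T\n w_t\n^2\,ds$. The paper handles it by Young's inequality $\n z_t\n^2\le\varepsilon+C_\varepsilon\n z_t\n^{p+2}$ combined with the monotonicity inequality of Lemma \ref{Zhao5} applied to the \emph{difference} of the damping terms, and then uses the integrated difference-energy identity to bound $\int_0^T(\n u_t\n^pu_t-\n v_t\n^pv_t,z_t)\,dt$ by $C_B$ plus an $f$-difference term (estimates (4.11)--(4.13) in the paper); this injects an extra term $C_\varepsilon|\int_0^T(f(u)-f(v),z_t)\,dt|$ into the contractive function. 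You instead exploit the dissipation of each \emph{individual} trajectory, $k\int_0^\infty\n u_t\n^{p+2}\,dt\le C_R$ from the energy equality (\ref{3-4}) and the lower bound (\ref{3-3-5}), together with Jensen's inequality, to get the explicit decay $\frac1T\int_0^T\n w_t\n^2\,ds\le(C_R/T)^{2/(p+2)}$. Both routes are valid; yours is somewhat more economical (it needs only the sign of the damping difference, not the quantitative monotonicity of Lemma \ref{Zhao5}, and it adds no extra $f$-term to $\Psi$), and it produces an explicit polynomial rate in $T$ that, as you note, anticipates the mechanism behind the polynomial attractor in Section 5, where the paper itself switches to an argument closer in spirit to yours (the concavity/Jensen step in (\ref{5-8})).
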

    \begin{proof}
    	Let $B\subset\e$ be a positively invariant bounded set.\\
    	\indent\textbf{A priori estimates.} For any $\xi,\xi'\in B$, let $\xi_u(t)=S(t)\xi,\ \xi_v(t)=S(t)\xi',$ and
    	 $z(t)=u(t)-v(t)$. Then $z(t)$ satisfies 
    	\begin{equation}\label{4-5-1}
    		z_{tt}-\Delta z+k\n u_t\n^pu_t-k\n v_t\n^pv_t+f(u)-f(v)=0,\ \xi_z(0)=\xi-\xi'.
    	\end{equation}
    	Denote 
    	\[E_z(t)=\frac{1}{2}\n\xi_z(t)\n_\e^2.\]
    	Multiplying (\ref{4-5-1}) by $z_t$ and integrating over $x\in\Omega$,
    	\begin{equation}\label{4-5-2}
    		\frac{d}{dt}E_z(t)+(k\n u_t\n^pu_t-k\n v_t\n^pv_t,z_t)+(f(u)-f(v),z_t)=0
    	\end{equation}
    	Integrating (\ref{4-5-2}) on $[t,T]$,
    	\begin{equation}\label{4-5-3}
    		E_z(T)+k\int_{t}^{T}(\n u_t\n^pu_t-\n v_t\n^pv_t,z_t)d\tau=E_z(t)-\int_{t}^{T}(f(u)-f(v),z_t)d\tau.
    	\end{equation}
    	Integrating (\ref{4-5-3}) on $[0,T]$ with respect to $t$,
    	\begin{equation}\label{4-5-4}
    		TE_z(T)+k\int_{0}^{T}\int_{t}^{T}(\n u_t\n^pu_t-\n v_t\n^pv_t,z_t)d\tau dt=\int_{0}^{T}E_z(t)dt-\int_{0}^{T}\int_{t}^{T}(f(u)-f(v),z_t)d\tau dt.
    	\end{equation}
    	Multiplying (\ref{4-5-1}) by $z$ and integrating over $x\in\Omega,t\in[0,T]$,
    	\begin{equation}\label{4-5-5}
    		\begin{split}
    			\int_{0}^{T}E_z(t)dt&=-\frac{1}{2}(z_t,z)|_0^T-\frac{k}{2}\int_{0}^{T}(\n u_t\n^pu_t-\n v_t\n^pv_t,z)dt\\
    			&\quad+\int_{0}^{T}\n z_t\n^2dt-\frac{1}{2}\int_{0}^{T}(f(u)-f(v),z)dt\\
    			&\leq C_B+TC_B\sup_{t\in[0,T]}\n z(t)\n+\int_{0}^{T}\n z_t\n^2dt+\frac{1}{2}\left|\int_{0}^{T}(f(u)-f(v),z)dt\right|.
    		\end{split}
    	\end{equation}
    	Integrating (\ref{4-5-2}) on $[0,T]$,
    	\begin{equation}\label{4-5-6}
    		\begin{split}
    			\int_{0}^{T}(k\n u_t\n^pu_t-k\n v_t\n^pv_t,u_t-v_t)dt&=E_z(0)-E_z(T)-\int_{0}^{T}(f(u)-f(v),z_t)dt\\
    			&\leq C_B+\left|\int_{0}^{T}(f(u)-f(v),z_t)dt\right|.
    		\end{split}
    	\end{equation}
    	Using Young's inequality with $\varepsilon$ and Lemma \ref{Zhao5},
    	\begin{equation}\label{4-5-7}
    		\begin{split}
    			\n z_t\n^2&\leq \varepsilon+C_\varepsilon\n u_t-v_t\n^{p+2}\\
    			&\leq \varepsilon+C_\varepsilon(k\n u_t\n^pu_t-k\n v_t\n^pv_t,u_t-v_t).
    		\end{split}
    	\end{equation}
    	Combining (\ref{4-5-6}) and (\ref{4-5-7}), we have
    	\begin{equation}\label{4-5-8}
    		\int_{0}^{T}\n z_t\n^2dt\leq \varepsilon T+C_{\varepsilon,B}+C_\varepsilon\left|\int_{0}^{T}(f(u)-f(v),z_t)dt\right|.
    	\end{equation}
    	Inserting (\ref{4-5-5}) and (\ref{4-5-8}) into (\ref{4-5-4}) and using Lemma \ref{Zhao5}, 
    	\begin{align*}
    		TE_z(T)&\leq \varepsilon T+C_{\varepsilon,B}+TC_B\sup_{t\in[0,T]}\n z(t)\n+\frac{1}{2}\left|\int_{0}^{T}(f(u)-f(v),z)dt\right|\\
    		&+C_\varepsilon\left|\int_{0}^{T}(f(u)-f(v),z_t)dt\right|+\left|\int_{0}^{T}\int_{t}^{T}(f(u)-f(v),z_t)d\tau dt\right|.
    	\end{align*}
         Set
        \begin{equation}\label{4-5-9}
        	\begin{split}
        		\Psi_T(\xi,\xi')&=T\sup_{t\in[0,T]}\n u(t)-v(t)\n+\left|\int_{0}^{T}(f(u)-f(v),u-v)dt\right|\\
        		&+\left|\int_{0}^{T}(f(u)-f(v),u_t-v_t)dt\right|+\left|\int_{0}^{T}\int_{t}^{T}(f(u)-f(v),u_t-v_t)d\tau dt\right|,
        	\end{split}
        \end{equation}
    	then we obtain that 
    	\begin{equation}
    		E_z(T)\leq \varepsilon+\frac{C_{\varepsilon,B}}{T}+\frac{C_{\varepsilon,B}}{T}\Psi_{T}(\xi,\xi').
    	\end{equation}
    	\indent\textbf{Asymptotical compactness.} Due to Lemma \ref{lem4.2}, it suffices to prove that the function $\Psi_{T}$ defined in (\ref{4-5-9}) belongs to $\mathcal{C}(B)$ for any fixed $T>0$.\\
    	\indent For any sequence $\{\xi_n\}_{n=1}^\infty\subset B$, let $\xi_{u^n}(t)=S(t)\xi_n$. Since $B$ is positively invariant and $\{u^n(t)\}_{n=1}^\infty,t\in[0,T]$ are Shatah-Struwe solutions, we have
    	\begin{equation}\label{4-11}
    		\n\xi_{u^n}\n_{L^\infty(0,T;\e)}\leq C_B,\ \n u^n\n_{L^4(0,T;L^{12})}\leq C_{B,T},\ \forall n\in\mathbb{N},
    	\end{equation} 
    	For the sequence $\{\xi_{u^n}\}_{n=1}^\infty$, from (\ref{4-11}), Lemma \ref{Aubin} and Lemma \ref{lem4.4}, we deduce that there exists a subsequence, still denoted by $\{\xi_{u^n}\}$, such that
    	\begin{equation}\label{4-12}
    		\n f(u^n)\n_{L^1(0,T;L^{2})}\leq CT+C\n u^n\n^4_{L^4(0,T;L^{12})}\n u^n\n_{L^\infty(0,T;H^1_0)}\leq C_{B,T},
    	\end{equation}
        and
    	\begin{equation}\label{4-13}
    		\begin{cases}
    			\xi_{u^n}\rightarrow \xi_u\ weakly^*\ in\ L^\infty(0,T;\e),\\
    			u^n\rightarrow u\ weakly\ in\ L^4(0,T;L^{12}),\\
    			u^n\rightarrow u\ strongly\ in\ C(0,T;L^2)\cap C(0,T;L^\frac{12}{\kappa+2}),\\
    			f(u^n)\rightarrow f(u)\ strongly\ in\ L^1(0,T;L^2).
    		\end{cases}
    	\end{equation}
    	Now, we are ready to verify that $\Psi_{T}
    	\in \mathcal{C}(B)$. Due to (\ref{4-11}), (\ref{4-12}) and (\ref{4-13}), we get
    	\begin{equation}\label{4-14}
    		\lim_{n\rightarrow\infty}\lim_{m\rightarrow\infty}\sup_{t\in[0,T]}\n u^n(t)-u^m(t)\n=0,
    	\end{equation}
    	\begin{equation}\label{4-15}
    		\begin{split}
    			&\quad\lim_{n\rightarrow\infty}\lim_{m\rightarrow\infty}\int_{0}^{T}(f(u^n)-f(u^m),u^n-u^m)dt\\
    			&\leq \lim_{n\rightarrow\infty}\lim_{m\rightarrow\infty}\n f(u^n)-f(u^m)\n_{L^1(0,T;L^{2})}\sup_{t\in[0,T]}\n u^n(t)-u^m(t)\n\\
    			&\leq C_{B,T}\lim_{n\rightarrow\infty}\lim_{m\rightarrow\infty}\sup_{t\in[0,T]}\n u^n(t)-u^m(t)\n=0,
    		\end{split}
    	\end{equation}
        \begin{equation}\label{4-16}
        	\begin{split}
        		&\quad\lim_{n\rightarrow\infty}\lim_{m\rightarrow\infty}\int_{t}^{T}(f(u^n)-f(u^m),u^n_t-u^m_t)d\tau\\
        		&\leq\lim_{n\rightarrow\infty}\lim_{m\rightarrow\infty}\n f(u^n)-f(u^m)\n_{L^1(t,T;L^{2})}\n u_t^n-u_t^m\n_{L^\infty(t,T;L^2)}\\
        		&\leq C_B\lim_{n\rightarrow\infty}\lim_{m\rightarrow\infty}\n f(u^n)-f(u^m)\n_{L^1(0,T;L^{2})}=0,\quad\forall t\in[0,T].
        	\end{split}
        \end{equation}
    	Moreover, by (\ref{4-11}) and (\ref{4-12}),
    	\begin{equation}\label{4-17}
    		\left|\int_{t}^{T}(f(u^n)-f(u^m),u^n_t-u^m_t)d\tau\right|\leq C_{B,T},\ \forall t\in[0,T].
    	\end{equation}
    	According to Lebesgue dominated convergence theorem, we infer from (\ref{4-16}) and (\ref{4-17}) that 
    	\begin{equation}\label{4-18}
    		\lim_{n\rightarrow\infty}\lim_{m\rightarrow\infty}\int_{0}^{T}\int_{t}^{T}(f(u^n)-f(u^m),u^n_t-u^m_t)d\tau dt=0.
    	\end{equation}
    	Combining (\ref{4-5-9}), (\ref{4-14}), (\ref{4-15}), (\ref{4-16}) and (\ref{4-18}) that
    	\[\lim_{n\rightarrow\infty}\lim_{m\rightarrow\infty}\Psi_{T}(\xi_n,\xi_m)=0.\]
    	Hence, $\Psi_{T}\in\mathcal{C}(B)$. The proof is complete.
    \end{proof}

    \begin{thm}\label{thm4.6}
    	Under the conditions (\ref{1.2}) and (\ref{1.3}), the dynamical system $(\e,S(t))$ generated by the Shatah-Struwe solutions of problem (\ref{eq1.1}) possesses a global attractor. 
    \end{thm}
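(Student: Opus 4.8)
The plan is to invoke the classical abstract criterion for the existence of a global attractor: a continuous semigroup on a complete metric space that admits a bounded absorbing set and is asymptotically compact possesses a unique global attractor, given by the $\omega$-limit set of any bounded absorbing set (see e.g. the standard references of Babin--Vishik, Temam, Ladyzhenskaya, or Chueshov--Lasiecka). All three hypotheses have, in effect, already been established in the preceding results, so the proof reduces to assembling them and checking that they match the precise form required by the abstract theorem.

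First I would recall that the semigroup $S(t):\e\to\e$ is genuinely well defined: by Corollary \ref{cor}, every initial datum $\xi_0\in\e$ generates a \emph{unique} global Shatah--Struwe solution, so the map $S(t)\xi_0:=\xi_u(t)$ makes sense for all $t\ge 0$ and satisfies the semigroup property. Moreover, the remark following Corollary \ref{cor} shows that $S(t):\e\to\e$ is locally Lipschitz continuous (this is exactly the Gronwall estimate used there for the difference of two solutions), hence continuous; thus $(\e,S(t))$ is a bona fide continuous dynamical system on the complete metric space $\e=H^1_0\times L^2$.

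Next I would collect the two substantive ingredients. Theorem \ref{thm4.1} furnishes the dissipativity of $(\e,S(t))$, i.e. the existence of a bounded absorbing set $\mathcal{B}\subset\e$: there is $R_0>0$ such that for every bounded $B\subset\e$ one can find $t_B\ge 0$ with $S(t)B\subset\mathcal{B}$ for all $t\ge t_B$. Theorem \ref{thm4.5} provides the asymptotic compactness of the semigroup, obtained through the contractive-function method of Lemma \ref{lem4.2} applied to the functional $\Psi_T$ of (\ref{4-5-9}). With continuity, a bounded absorbing set, and asymptotic compactness all in hand, the abstract attractor theorem yields a nonempty, compact, fully invariant set $\mathcal{A}=\omega(\mathcal{B})=\bigcap_{s\ge 0}\overline{\bigcup_{t\ge s}S(t)\mathcal{B}}$ that attracts every bounded subset of $\e$ in the $\e$-metric, and $\mathcal{A}$ is the unique such set; this is the desired global attractor.

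I do not expect a real obstacle at this stage, since the analytically delicate work — the Strichartz-based global well-posedness, the uniform dissipative estimate, and the contractive-function argument — has already been carried out. The only genuine care needed is bookkeeping: to confirm that ``asymptotic compactness'' as delivered by Lemma \ref{lem4.2} is exactly the hypothesis consumed by the chosen version of the abstract theorem (equivalently, that the $\omega$-limit set of the absorbing set is precompact and that every bounded-orbit sequence has a convergent subsequence), which it is. Hence the conclusion follows directly by combining Theorems \ref{thm4.1} and \ref{thm4.5} with the continuity of $S(t)$.
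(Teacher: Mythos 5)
Your proposal is correct and follows essentially the same route as the paper: the authors likewise cite Corollary \ref{cor} for the well-definedness and continuity of $S(t)$, and then combine Theorem \ref{thm4.1} (dissipativity) and Theorem \ref{thm4.5} (asymptotic compactness) with the standard abstract attractor theorem. Your additional remarks identifying the attractor as $\omega(\mathcal{B})$ are consistent with, if slightly more explicit than, the paper's one-line argument.
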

    \begin{proof}
    	From Corollary \ref{cor}, the Shatah-Struwe solution semigroup $S(t):\e\rightarrow\e$ is well-defined and continuous for any fixed $t\geq0.$ Then, by the standard abstract results about the existence of attractor (see e.g. \cite{Babin,Hale,Temam}), we can conclude that Theorem \ref{thm4.6} is an immediate consequence of Theorem \ref{thm4.1} and Theorem \ref{thm4.5}.
    \end{proof}

\section{Polynomical attractor}
    In this section, we will show that the dynamical system $(\e,S(t))$ possesses a polynomial attractor. We begin with the definition.
    \begin{Def}\cite[Definition 2.6]{Zhao5}\label{poly}
    	Let $\{S(t)\}_{t\geq0}$ be a continuous semigroup on a complete metric space $(X,d)$. Assume that $\varphi:\mathbb{R}^+\rightarrow\mathbb{R}^+$ satisfies $\varphi(t)\rightarrow0$ as $t\rightarrow\infty.$ A compact set $\mathcal{A}^*\subset X$ is said to be a generalized $\varphi$-attractor for the dynamical system $(X,S(t))$ if\\
    	(i) $\mathcal{A}^*$ is positively invariant, i.e. $S(t)\mathcal{A}^*\subset\mathcal{A}^*,\ \forall t\geq0$;\\
    	(ii) there exists $t_0\in\mathbb{R}$ such that for any bounded set $B\subset X$ there exists $t_B\geq0$ such that
    	\[\mathrm{dist}(S(t)B,\mathcal{A}^*)\leq \varphi(t+t_0-t_B),\quad \forall t\geq t_B,\]
    	where $\mathrm{dist}(A,B):=\sup_{x\in A}\inf_{y\in B}d(x,y)$ is the Hausdorff semidistance. In particular, if $\varphi(t)=Ct^{-\beta}$ for certain positive constants $C,\beta$, then $\mathcal{A}^*$ is called a generalized polynomial attractor; if $\varphi(t)=Ce^{-\beta t}$ for certain positive constants $C,\beta$, then $\mathcal{A}^*$ is called a generalized exponential attractor.
    \end{Def}

    \begin{lemma}\cite[Theorem 3.1]{Zhao5}\label{lem5.2}
    	Let $\{S(t)\}_{t\geq0}$ be a dissipative continuous semigroup on a complete metric space $(X,d)$ and $\mathcal{B}_0$ be a positively invariant bounded absorbing set. Assume that there exist positive constants $C,T,\delta_0,\beta\in(0,1)$, functions $g_l:(\mathbb{R}^+)^m\rightarrow\mathbb{R}^+(l=1,2)$ and pseudometrics $\varrho_T^i(i=1,2,\cdots,m)$ on $\mathcal{B}_0$ such that\\
    	(i) $g_l$ is non-decreasing with respect to each variable, $g_l(0,\cdots,0)=0$ and $g_l$ is continuous at $(0,\cdots,0)$;\\
    	(ii) $\varrho_T^i(i=1,2,\cdots,m)$ is precompact on $\mathcal{B}_0$, i.e. any sequence $\{x_n\}\subset\mathcal{B}_0$ has a subsequence $\{x_{n_k}\}$ which is Cauchy with respect to $\varrho_T^i$;\\
    	(iii) for any $y_1,y_2\in\mathcal{B}_0$ satisfying $\varrho_T^i(y_1,y_2)\leq\delta_0(i=1,2,\cdots,m)$, the following inequalities holds:
    	\begin{equation}
    		(d(S(T)y_1,S(T)y_2))^2\leq (d(y_1,y_2))^2+g_1(\varrho_T^1(y_1,y_2),\varrho_T^2(y_1,y_2),\cdots,\varrho_T^m(y_1,y_2)),
    	\end{equation}
        \begin{equation}
        	\begin{split}
        		(d(S(T)y_1,S(T)y_2))^2\leq&C[d(y_1,y_2)^2-(d(S(T)y_1,S(T)y_2))^2\\
        		&+g_1(\varrho_T^1(y_1,y_2),\varrho_T^2(y_1,y_2),\cdots,\varrho_T^m(y_1,y_2))]^\beta\\
        		&+g_2(\varrho_T^1(y_1,y_2),\varrho_T^2(y_1,y_2),\cdots,\varrho_T^m(y_1,y_2)).
        	\end{split}
        \end{equation}
        Then, the dynamical system $(X,S(t))$ possesses a generalized polynomial attractor $\mathcal{A}^*$ and there exists $t_0>0$ such that
        \begin{equation}
        	\mathrm{dist}(S(t)B,\mathcal{A}^*)\leq \left\{(\alpha(\mathcal{B}_0))^\frac{2(\beta-1)}{\beta}+\frac{1-\beta}{T\beta(1+2C)^\frac{1}{\beta}}(t-t_0-2T-t_*(B)-1)\right\}^\frac{\beta}{2(\beta-1)}
        \end{equation}
        holds for every bounded set $B\subset X$, where $t_*(B)$ satisfies $S(t)B\subset \mathcal{B}_0(\forall t\geq t_*(B))$ and $\alpha(\mathcal{B}_0)$ denotes the noncompactness measure of $\mathcal{B}_0$.
    
    \end{lemma}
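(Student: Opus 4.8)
The plan is to realize $\mathcal{A}^*$ as the $\omega$-limit set of the absorbing set, $\mathcal{A}^*=\omega(\mathcal{B}_0)=\bigcap_{s\ge0}\overline{\bigcup_{t\ge s}S(t)\mathcal{B}_0}$, and to squeeze the polynomial rate out of the two quantitative inequalities in (iii) by passing everything through the Kuratowski measure of noncompactness $\alpha$. First I would establish that $(X,S(t))$ is asymptotically compact, which together with dissipativity and the positively invariant bounded absorbing set $\mathcal{B}_0$ yields at once that $\mathcal{A}^*$ is a nonempty compact positively invariant set attracting every bounded set. The essential input is the precompactness of each pseudometric $\varrho_T^i$ on $\mathcal{B}_0$: given $\varepsilon>0$, I cover $\mathcal{B}_0$ by finitely many pieces that are simultaneously small in all $m$ pseudometrics (a finite family of precompact pseudometric structures is still precompact) and small enough that $g_1\le\varepsilon$ and each $\varrho_T^i\le\delta_0$ there. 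Applying the first inequality of (iii) on each piece shows $\alpha\bigl(S(T)\mathcal{B}_0\bigr)^2\le\alpha(\mathcal{B}_0)^2$, so the measure of noncompactness does not increase under $S(T)$; iterating this gives the standard conclusion that $\mathcal{A}^*$ is the global attractor.

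Next, to quantify the decay I would set $a_n:=\alpha\bigl(S(nT)\mathcal{B}_0\bigr)^2$ and run the same covering argument against the \emph{second} inequality of (iii). Since $g_1$ and $g_2$ are driven below any $\varepsilon$ on the small pieces and hence contribute nothing to the measure of noncompactness, I expect to extract the scalar recursion
\[
a_{n+1}\ \le\ C\,(a_n-a_{n+1})^{\beta},\qquad n\ge0,
\]
where the bracket is nonnegative by the first inequality. With $\beta\in(0,1)$ this rearranges to $a_n-a_{n+1}\ge C^{-1/\beta}a_{n+1}^{1/\beta}$, a super-linear difference inequality.

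A discrete comparison lemma (treating separately the cases $a_{n+1}\ge a_n/2$ and $a_{n+1}<a_n/2$, or comparing with the ODE $y'=-c\,y^{1/\beta}$) then shows that $\phi(x)=x^{(\beta-1)/\beta}$, which is decreasing since $\tfrac{\beta-1}{\beta}<0$, grows at least linearly along the sequence, giving
\[
a_n^{\frac{\beta-1}{\beta}}\ \ge\ \alpha(\mathcal{B}_0)^{\frac{2(\beta-1)}{\beta}}+c'\,n,
\]
whence $a_n\le\bigl(\alpha(\mathcal{B}_0)^{\frac{2(\beta-1)}{\beta}}+c'n\bigr)^{\frac{\beta}{\beta-1}}$, a genuine decay because the outer exponent is negative. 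Converting $n$ to continuous time via $t=nT$ and absorbing into $c'$ the constant of the form $1+2C$ that arises from using both inequalities together in the covering step reproduces the coefficient $\tfrac{1-\beta}{T\beta(1+2C)^{1/\beta}}$. Finally I transfer this to the Hausdorff semidistance: for a bounded set $B$ the absorbing property gives $S(t)B\subset\mathcal{B}_0$ for $t\ge t_*(B)$, so $\mathrm{dist}(S(t)B,\mathcal{A}^*)\le\mathrm{dist}\bigl(S(t-t_*(B))\mathcal{B}_0,\mathcal{A}^*\bigr)$; since $\mathcal{A}^*$ is invariant and attracts $\mathcal{B}_0$, this semidistance is controlled by $\alpha\bigl(S(\cdot)\mathcal{B}_0\bigr)$ a bounded number of time-$T$ steps earlier, which accounts for the shift $-t_0-2T-t_*(B)-1$ and yields the displayed rate when combined with the bound on $a_n$.

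I expect the main obstacle to be precisely the covering/bookkeeping step that makes $g_1$ and $g_2$ vanish from the measure of noncompactness while preserving the exact $\beta$-power structure of the second inequality: one must produce finite covers of $\mathcal{B}_0$ that are simultaneously fine in all $m$ pseudometrics and fine enough for the $g_l$, then pass from the pairwise estimates on the pieces to an inequality for $\alpha\bigl(S(T)\,\cdot\bigr)$ on the whole set, tracking the constant $1+2C$ faithfully. The remaining delicate points are the discrete-to-continuous conversion of the time-$T$-step decay (which forces the shifts by $2T$ and $1$ through the floor function) and the precise identification of the reference time $t_0$.
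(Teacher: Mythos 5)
This lemma is quoted from \cite[Theorem 3.1]{Zhao5} and the paper itself gives no proof, so there is nothing internal to compare against; judging your reconstruction on its own terms, the quantitative engine is right but the construction of $\mathcal{A}^*$ is not. The covering argument that intersects a near-optimal $d$-cover of $\mathcal{B}_0$ with finite covers that are small in all $m$ pseudometrics, the resulting recursion $a_{n+1}\le C'(a_n-a_{n+1})^{\beta}$ for $a_n=\alpha(S(nT)\mathcal{B}_0)^2$, and its resolution via the decreasing function $x\mapsto x^{(\beta-1)/\beta}$ are all consistent with the exponents and with the constant $\frac{1-\beta}{T\beta(1+2C)^{1/\beta}}$ in the stated rate. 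Those steps I believe you could carry out.

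The genuine gap is the final transfer: you take $\mathcal{A}^*=\omega(\mathcal{B}_0)$ and assert that $\mathrm{dist}(S(t)\mathcal{B}_0,\mathcal{A}^*)$ is ``controlled by $\alpha(S(\cdot)\mathcal{B}_0)$ a bounded number of time-$T$ steps earlier.'' No such control exists: the Kuratowski measure of noncompactness of $S(t)\mathcal{B}_0$ bounds the radius of a finite cover of that set by small pieces, i.e.\ its distance to a suitable \emph{finite} set, but says nothing quantitative about its distance to the $\omega$-limit set. Already in a finite-dimensional space one has $\alpha(S(t)\mathcal{B}_0)\equiv 0$ while $\mathrm{dist}(S(t)\mathcal{B}_0,\omega(\mathcal{B}_0))$ may decay arbitrarily slowly, so the implication you invoke is false as a general principle. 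The correct argument must build $\mathcal{A}^*$ in the manner of exponential-attractor constructions: at each stage $n$ extract from the finite cover of $S(nT)\mathcal{B}_0$ a finite $r_n$-net $F_n$ (with $r_n$ the polynomially decaying covering radius produced by your recursion), adjoin the forward orbits of the $F_n$ to enforce positive invariance, and take the closure, verifying compactness because these orbits accumulate only on $\omega(\mathcal{B}_0)$. The distance estimate then follows because $\mathcal{A}^*$ contains the nets themselves. The statement's insistence that $\mathcal{A}^*$ is merely \emph{positively} invariant (whereas $\omega(\mathcal{B}_0)$ would be fully invariant) is itself the signal that the attractor is this enlarged set and not the $\omega$-limit set; your sketch also silently uses full invariance (``since $\mathcal{A}^*$ is invariant\dots''), which is not available for the object the theorem actually produces.
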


    We are ready to state the main result in this section.
    \begin{thm}
    	Under the conditions (\ref{1.2}) and (\ref{1.3}), the dynamical system $(\e,S(t))$ generated by the Shatah-Struwe solutions of problem (\ref{eq1.1}) possesses a generalized polynomial attractor $\mathcal{A}^*$ and there exists $t_0>0$ such that
    	\[\mathrm{dist}(S(t)B,\mathcal{A}^*)\leq \left\{(\alpha(\mathcal{B}_0))^{-p}+\frac{pkC_p}{2^{p+2}}(t-t_0-t_*(B)-1)\right\}^{-\frac{1}{p}}.\]
    	holds for every bounded set $B\subset X$, where $\mathcal{B}_0\subset\e$ is a positively invariant bounded absorbing set of $(\e,S(t))$ and  $t_*(B)$ satisfies $S(t)B\subset \mathcal{B}_0(\forall t\geq t_*(B))$.  
    \end{thm}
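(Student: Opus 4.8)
The plan is to verify the hypotheses of Lemma \ref{lem5.2} with $m=2$, with the metric $d$ induced by $\n\cdot\n_\e$, and with the exponent $\beta=\frac{2}{p+2}\in(0,1)$. With this choice the exponents in the abstract decay estimate become $\frac{2(\beta-1)}{\beta}=-p$ and $\frac{\beta}{2(\beta-1)}=-\frac1p$, which matches the stated formula; the precise coefficient $\frac{pkC_p}{2^{p+2}}$ then follows by tracking the constants $T$ and $C$ through the two inequalities below, and the harmless shift $-2T$ in the abstract estimate is absorbed into $t_0$. A positively invariant bounded absorbing set $\mathcal{B}_0$ exists by the dissipativity of Theorem \ref{thm4.1} together with Theorem \ref{thm4.6}. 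Fix $y_1,y_2\in\mathcal{B}_0$, write $\xi_u(t)=S(t)y_1$, $\xi_v(t)=S(t)y_2$, $z=u-v$, and set $E_z(t)=\frac12\n\xi_z(t)\n_\e^2$, so that $d(y_1,y_2)^2=2E_z(0)$ and $d(S(T)y_1,S(T)y_2)^2=2E_z(T)$; as in (\ref{4-5-1})--(\ref{4-5-2}), $z$ satisfies the difference equation and the energy identity $\frac{d}{dt}E_z+(k\n u_t\n^pu_t-k\n v_t\n^pv_t,z_t)+(f(u)-f(v),z_t)=0$.

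For the pseudometrics I would take
\[\varrho_T^1(y_1,y_2)=\sup_{t\in[0,T]}\n u(t)-v(t)\n,\qquad \varrho_T^2(y_1,y_2)=\n f(u)-f(v)\n_{L^1(0,T;L^2)}.\]
Integrating the energy identity on $[0,T]$ and discarding the damping term, which is nonnegative by Lemma \ref{Zhao5} applied with exponent $p+2\ge2$, gives
\[E_z(T)\le E_z(0)+\Big|\int_0^T(f(u)-f(v),z_t)\,dt\Big|\le E_z(0)+C_B\,\varrho_T^2,\]
which is the first inequality of Lemma \ref{lem5.2} with $g_1$ a suitable multiple of $\varrho_T^2$. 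Every remainder term appearing below is likewise controlled by these two pseudometrics: each carries a factor $f(u)-f(v)$ paired in $L^1_tL^2_x$ against $z$ or $z_t$ (both bounded in $L^\infty(0,T;L^2)$ on $\mathcal{B}_0$), so it is dominated by $C_B\varrho_T^2$ or by $\varrho_T^1\varrho_T^2$, while boundary and damping-multiplier terms are dominated by $C_B(1+T)\varrho_T^1$.

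The heart of the argument is the second inequality, where the polynomial exponent $\beta=\frac{2}{p+2}$ is produced. First, the monotonicity estimate of Lemma \ref{Zhao5} turns the integrated energy identity into
\[kC_p\int_0^T\n z_t\n^{p+2}\,dt\le E_z(0)-E_z(T)+C_B\varrho_T^2.\]
Next, multiplying the difference equation by $z$ and integrating over $\Omega\times[0,T]$ as in (\ref{4-5-5}) bounds $\int_0^T\n\nabla z\n^2\,dt$ by $\int_0^T\n z_t\n^2\,dt$ plus pseudometric terms, whence $\int_0^T E_z(t)\,dt\le\int_0^T\n z_t\n^2\,dt+(\text{pseudometric terms})$; integrating (\ref{4-5-3}) over $t\in[0,T]$ and again dropping the damping term yields $TE_z(T)\le\int_0^T E_z(t)\,dt+(\text{pseudometric terms})$. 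Chaining these gives $TE_z(T)\le\int_0^T\n z_t\n^2\,dt+(\text{pseudometric terms})$. Finally I apply Hölder in time, $\int_0^T\n z_t\n^2\,dt\le T^{\frac{p}{p+2}}\big(\int_0^T\n z_t\n^{p+2}\,dt\big)^{\frac{2}{p+2}}$, and insert the dissipation bound above to obtain
\[E_z(T)\le C\big[E_z(0)-E_z(T)+C_B\varrho_T^2\big]^{\frac{2}{p+2}}+(\text{pseudometric terms}),\]
which is precisely the second inequality of Lemma \ref{lem5.2} with $\beta=\frac{2}{p+2}$ and the same $g_1=2C_B\varrho_T^2$.

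It remains to check that $\varrho_T^1,\varrho_T^2$ are precompact on $\mathcal{B}_0$ and that hypothesis (i) holds. Precompactness of $\varrho_T^1$ follows from Lemma \ref{Aubin}: on $\mathcal{B}_0$ the solutions lie in $\{\xi_u\in L^\infty(0,T;\e),\ u_{tt}\in L^\infty(0,T;H^{-1})\}$, so along a subsequence $u^n\to u$ strongly in $C(0,T;L^2)$; precompactness of $\varrho_T^2$ is exactly the compactness of $T_f$ from Lemma \ref{lem4.4}. Since all remainders were written as sums and products of $\varrho_T^1,\varrho_T^2$ with $\mathcal{B}_0$-uniform constants, the resulting $g_1,g_2$ are non-decreasing, vanish at the origin, and are continuous there. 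The main obstacle is the second inequality: one must balance the three multiplier identities so that the only surviving unfavorable quantity is $\int_0^T\n z_t\n^2\,dt$, after which the exponent $\frac{2}{p+2}$ emerges \emph{solely} from the Hölder interpolation between $\n z_t\n^2$ and the dissipated quantity $\n z_t\n^{p+2}$ --- this is where the nonlocal weak damping forces polynomial rather than exponential attraction, and where keeping every remainder inside the class of precompact pseudometrics relies on the sup-cubic compactness of Lemma \ref{lem4.4}.
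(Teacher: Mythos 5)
Your proposal is correct and follows essentially the same route as the paper: the same abstract criterion (Lemma \ref{lem5.2}) with $\beta=\frac{2}{p+2}$, the same energy and multiplier identities, the same H\"older-in-time interpolation between $\n z_t\n^2$ and the dissipated $\n z_t\n^{p+2}$, and the same compactness source (Lemma \ref{lem4.4}); the paper merely packages your two pseudometrics into the single one $\varrho_T(y_1,y_2)=\sup_{t\in[0,T]}\n z(t)\n_{L^{12/(\kappa+2)}}$, which dominates both. The one detail you leave implicit is that the exact coefficient $\frac{pkC_p}{2^{p+2}}$ is obtained by first deriving a $T$-dependent rate and then letting $T\rightarrow0$, using that $T>0$ was arbitrary.
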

    \begin{proof}
    	By Theorem \ref{thm4.1}, the dynamical system $(\e,S(t))$ is dissipative and let $\mathcal{B}_0\subset\e$ be a positively invariant bounded absorbing set of $(\e,S(t))$. 
    	
    	For any $y_1,y_2\in\mathcal{B}_0$, let $\xi_u(t)=S(t)y_1, \xi_v(t)=S(t)y_2$ and $z(t)=u(t)-v(t)$. Let $T$ be an arbitrary positive constant, then since $\mathcal{B}_0$ is positively invariant and $u(t),v(t)$ are Shatah-Struwe solutions, we have
    	\begin{equation}\label{5-4}
    		\begin{cases}
    			\n\xi_u\n_{L^\infty(\mathbb{R}^+;\e)}\leq C,\ \n\xi_v\n_{L^\infty(\mathbb{R}^+;\e)}\leq C;\\
    			\n u\n_{L^4(0,T;L^{12})}\leq C_{T},\ \n v\n_{L^4(0,T;L^{12})}\leq C_{T}.
    		\end{cases}
    	\end{equation}
        The function $z(t)$ satisfies
        \begin{equation}
        	z_{tt}-\Delta z+k\n u_t\n^pu_t-k\n v_t\n^pv_t+f(u)-f(v)=0,\ \xi_z(0)=y_1-y_2.
        \end{equation}
        Denote 
        \[E_z(t)=\frac{1}{2}\n\xi_z(t)\n_\e^2.\]
        Arguing as in the "\textbf{A priori estimates}" part of Theorem \ref{thm4.5}, we can infer from (\ref{4-5-3}), (\ref{4-5-4}) and (\ref{4-5-5}) that
        \begin{equation}\label{5-6}
        	E_z(T)+k\int_{0}^{T}(\n u_t\n^pu_t-\n v_t\n^pv_t,z_t)dt=E_z(0)-\int_{0}^{T}(f(u)-f(v),z_t)dt,
        \end{equation}
        \begin{equation}\label{5-7}
        	\begin{split}
        		TE_z(T)&+k\int_{0}^{T}\int_{t}^{T}(\n u_t\n^pu_t-\n v_t\n^pv_t,z_t)d\tau dt\\
        		&=-\frac{1}{2}(z_t,z)|_0^T-\frac{k}{2}\int_{0}^{T}(\n u_t\n^pu_t-\n v_t\n^pv_t,z)dt+\int_{0}^{T}\n z_t\n^2dt\\
        		&\quad-\frac{1}{2}\int_{0}^{T}(f(u)-f(v),z)dt-\int_{0}^{T}\int_{t}^{T}(f(u)-f(v),z_t)d\tau dt.
        	\end{split}
        \end{equation}
        By Lemma \ref{Zhao5}, 
        \[(\n u_t\n^pu_t-\n v_t\n^pv_t,u_t-v_t)\geq C_p\n u_t-v_t\n^{p+2},\]
        which, together with the concavity of the function $h(s)=s^\frac{2}{p+2}(s>0)$, yields
        \begin{equation}\label{5-8}
        	\begin{split}
        		\int_{0}^{T}\n z_t\n^2dt&\leq C_p^{-\frac{2}{p+2}}\int_{0}^{T}(\n u_t\n^pu_t-\n v_t\n^pv_t,u_t-v_t)^\frac{2}{p+2}dt\\
        		&\leq C_p^{-\frac{2}{p+2}}T^\frac{p}{p+2}\left(\int_{0}^{T}(\n u_t\n^pu_t-\n v_t\n^pv_t,z_t)dt\right)^\frac{2}{p+2}.
        	\end{split}
        \end{equation}
        Combining (\ref{5-6}) with (\ref{5-8}), we have
        \begin{equation}\label{5-9}
        	\begin{split}
        		\int_{0}^{T}\n z_t\n^2dt\leq (kC_p)^{-\frac{2}{p+2}}T^\frac{p}{p+2}\left(E_z(0)-E_z(T)-\int_{0}^{T}(f(u)-f(v),z_t)dt\right)^\frac{2}{p+2}.
        	\end{split}
        \end{equation}
        By the proof of Lemma \ref{lem4.4} and the estimate (\ref{5-4}), 
        \begin{align*}
        	\n f(u)-f(v)\n_{L^1(0,T;L^{2})}&\leq C(T+\n u\n_{L^4(0,T;L^{12})}+\n v\n_{L^4(0,T;L^{12})})\sup_{t\in[0,T]}\n u(t)-v(t)\n_{L^\frac{12}{\kappa+2}}\\
        	&\leq C_{T}\sup_{t\in[0,T]}\n z(t)\n_{L^\frac{12}{\kappa+2}}.
        \end{align*}
        Then, it is easy to obtain the following estimates:
        \[-\frac{1}{2}(z_t,z)|_0^T\leq C\sup_{t\in[0,T]}\n z(t)\n,\]
        \[-\frac{k}{2}\int_{0}^{T}(\n u_t\n^pu_t-\n v_t\n^pv_t,z)dt\leq C_T\sup_{t\in[0,T]}\n z(t)\n,\]
        \begin{align*}
        	-\frac{1}{2}\int_{0}^{T}(f(u)-f(v),z)dt&\leq \n f(u)-f(v)\n_{L^1(0,T;L^{2})}\cdot\n z\n_{L^\infty(0,T;L^2)}\\
        	&\leq C\n f(u)-f(v)\n_{L^1(0,T;L^{2})}\\
        	&\leq C_{T}\sup_{t\in[0,T]}\n z(t)\n_{L^\frac{12}{\kappa+2}},
        \end{align*}
        \begin{align*}
        	\int_{t}^{T}(f(u)-f(v),z_t)d\tau&\leq \n f(u)-f(v)\n_{L^1(t,T;L^{2})}\cdot\n z_t\n_{L^\infty(t,T;L^2)}\\
        	&\leq C\n f(u)-f(v)\n_{L^1(0,T;L^{2})}\\
        	&\leq C_{T}\sup_{t\in[0,T]}\n z(t)\n_{L^\frac{12}{\kappa+2}},\quad \forall t\in[0,T].
        \end{align*}
        Plugging these inequalities and (\ref{5-9}) into (\ref{5-7}) and using Lemma \ref{Zhao5}, we end up with
        \begin{equation}\label{5-10}
        	\begin{split}
        		E_z(T)&\leq C_T\left(\sup_{t\in[0,T]}\n z(t)\n+\sup_{t\in[0,T]}\n z(t)\n_{L^\frac{12}{\kappa+2}}\right)\\
        		&\quad+(kTC_p)^{-\frac{2}{p+2}}\left(E_z(0)-E_z(T)+C_T\sup_{t\in[0,T]}\n z(t)\n_{L^\frac{12}{\kappa+2}}\right)^\frac{2}{p+2}\\
        		&\leq C_T\sup_{t\in[0,T]}\n z(t)\n_{L^\frac{12}{\kappa+2}}+(kTC_p)^{-\frac{2}{p+2}}\left(E_z(0)-E_z(T)+C_T\sup_{t\in[0,T]}\n z(t)\n_{L^\frac{12}{\kappa+2}}\right)^\frac{2}{p+2},
        	\end{split}
        \end{equation}
        where the last less-than-equal comes from the embedding $L^\frac{12}{\kappa+2}\hookrightarrow L^2(\frac{12}{\kappa+2}\geq2)$. From (\ref{5-6}), we also have
        \begin{equation}\label{5-11}
        	E_z(T)\leq E_z(0)+\n f(u)-f(v)\n_{L^1(0,T;L^{2})}\cdot\n z_t\n_{L^\infty(0,T;L^2)}\leq E_z(0)+C_T\sup_{t\in[0,T]}\n z(t)\n_{L^\frac{12}{\kappa+2}}.
        \end{equation}
         Since $2\leq\frac{12}{\kappa+2}<6$, we have $H_0^1\hookrightarrow\hookrightarrow L^\frac{12}{\kappa+2}\hookrightarrow L^2$. Using Lemma \ref{Aubin}, we know that $\{u\in L^\infty(0,T;H^1_0)|u_t\in L^\infty(0,T;L^2)\}\hookrightarrow\hookrightarrow C(0,T;L^\frac{12}{\kappa+2})$. Thus, $\varrho_T(y_1,y_2)=\sup_{t\in[0,T]}\n z(t)\n_{L^\frac{12}{\kappa+2}}$ is precompact on $\mathcal{B}_0$. Then, by the definition of $E_z(t)$, we infer from (\ref{5-11}) and (\ref{5-10}), 
         \begin{equation}\label{5-12}
         	\n S(T)y_1-S(T)y_2\n_\e^2\leq \n y_1-y_2\n_\e^2+2C_T\varrho_T(y_1,y_2)
         \end{equation}
        and
        \begin{equation}\label{5-13}
        	\begin{split}
        		\n S(T)y_1-S(T)y_2\n_\e^2\leq&2(2kTC_p)^{-\frac{2}{p+2}}\left(\n y_1-y_2\n_\e^2-\n S(T)y_1-S(T)y_2\n_\e^2+2C_T\varrho_T(y_1,y_2)\right)^\frac{2}{p+2}\\
        		&+2C_T\varrho_T(y_1,y_2).
        	\end{split}
        \end{equation}
        By Lemma \ref{lem5.2}, we deduce from (\ref{5-12}) and (\ref{5-13}) that the dynamical system $(\e,S(t))$ possesses a generalized polynomial attractor $\mathcal{A}^*$ and there exists $t_0>0$ such that
        \begin{equation}\label{5-14}
        	\mathrm{dist}(S(t)B,\mathcal{A}^*)\leq \left\{(\alpha(\mathcal{B}_0))^{-p}+\frac{p}{2(T^\frac{2}{p+2}+2^\frac{2p+2}{p+2}(kC_p)^{-\frac{2}{p+2}})^\frac{p+2}{2}}(t-t_0-2T-t_*(B)-1)\right\}^{-\frac{1}{p}}
        \end{equation}
        holds for every bounded set $B\subset X$, where $t_*(B)$ satisfies $S(t)B\subset \mathcal{B}_0(\forall t\geq t_*(B))$. 
        Since $T>0$ is arbitrary, we can pass the limit $T\rightarrow0$ in (\ref{5-14}) and obtain that
        \[\mathrm{dist}(S(t)B,\mathcal{A}^*)\leq \left\{(\alpha(\mathcal{B}_0))^{-p}+\frac{pkC_p}{2^{p+2}}(t-t_0-t_*(B)-1)\right\}^{-\frac{1}{p}}.\]
        The proof is complete.
    \end{proof}

\section*{Acknowledgements}
We would like to express our sincere thanks to the everyone valuable comments and suggestions which play an important role of our original manuscript. This work was supported by the National Science Foundation of China Grant (11731005) .

\section*{Data Availability Statement}	
Data sharing is not applicable to this article as no new data were created or analyzed in this study.


\begin{thebibliography}{24}
	\bibliographystyle{model1-num-names}
	
	
	 \bibitem{Babin}
	A. Babin and M. Vishik, Attractors of Evolution Equations, North-Holland, Amsterdam, 1992.
	
	\bibitem{Bala}
	A.V. Balakrishnan and L.W. Taylor, Distributed parameter nonlinear damping models for flight structures, in: Proceedings Damping 89, Flight Dynamics Lab and Air Force Wright Aeronautical Labs, WPAFB, 1989.
	

	
	\bibitem{Blair}
    M. D. Blair, H. F. Smith and C. D. Sogge,
    Strichartz estimates for the wave equation on manifolds with boundary, \emph{Ann. Inst. H. Poincaré Anal. Non Linéaire}, \textbf{26} (2009), 1817-1829.

    \bibitem{Burq1}
    N. Burq, G. Lebeau and F. Planchon, Global existence for energy critical waves in 3-D domains, \emph{J. Amer. Math. Soc.}, \textbf{21} (2008), 831-845.

    \bibitem{Burq2}
    N. Burq and F. Planchon, Global existence for energy critical waves in 3-D domains: Neumann boundary conditions, \emph{Amer. J. Math.} \textbf{131} (2009),1715-1742.
	

	\bibitem{Cava}
	M. Cavalcanti, M.A.J. Silva and C. Webler, Exponential stability for the wave equation with degenerate nonlocal weak damping, \emph{Israel J. Math.} \textbf{219} (2017), 189-213.
	
	
	\bibitem{Chueshov2015}
	I. Chueshov, \emph{Dynamics of Quasi-stable Dissipative Systems}, Springer, Switzerland, 2015. 
	
	\bibitem{Chueshov2008}
	I. Chueshov and I. Lasiecka, \emph{Long-time Behavior of Second Order Evolution Equations with Nonlinear Damping}, Mem. Amer. Math. Soc., vol. 195, (Providence, RI: American Mathematical Society), 2008. 
	
	 \bibitem{Hale}
	J. Hale, Asymptotic Behavior of Dissipative Systems, \emph{American Mathematical Society}, Providence, RI, 1988.
	
	
	\bibitem{Zelik1}
	V. Kalantarov, A. Savostianov, S. Zelik, Attractors for damped quintic wave equations in bounded domains, \emph{Ann. Henri Poincaré}, \textbf{17} (2016),2555-2584.
	
	 \bibitem{Lions}
	J.L. Lions, \emph{Quelques méthodes de résolution des problèmes aux limites non linéaires}, Dunod, Paris, 1969.
	
	\bibitem{Liu1}
	C. Liu, F. Meng and C. Sun, Well-posedness and attractors for a super-cubic weakly damped wave equation with $H^{-1}$ source term, \emph{Journal of Differential Equations}, \textbf{263} (2017), 8718-8748.

	
    \bibitem{Savostianov}
    A. Savostianov, Strichartz estimates and smooth attractors for a sub-quintic wave equation with fractional damping in bounded domains, \emph{Adv. Differential Equations}, \textbf{20} (2015), 495-530.
	
	\bibitem{Savostianov2}
	A. Savostianov and S.V. Zelik, Smooth attractors for the quintic wave equations with fractional damping, Asymptot. Anal. \textbf{87} (2014), 191-221.
	
	\bibitem{Savostianov3}
	A. Savostianov and S.V. Zelik, Recent progress in attractors for quintic wave equations, Math. Bohem, \textbf{139} (2014), 657-665.

   \bibitem{Silva1}
   M.A.J. Silva and V. Narciso, Attractors and their properties for a class of nonlocal extensible beams, \emph{ Discrete Contin. Dyn. Syst.}, \textbf{35} (2015), 985-1008.
   
   \bibitem{Silva2}
   M.A.J. Silva and V. Narciso, Long-time dynamics for a class of extensible beams with nonlocal nonlinear damping, \emph{Evol. Equ. Control Theory}, \textbf{6} (2017), 437-470.
   
   \bibitem{Silva3}
   M.A.J. Silva, V. Narciso and A. Vicente, On a beam model related to flight structures with nonlocal energy damping, \emph{Discrete Contin. Dyn. Syst. Ser. B}, \textbf{24} (2019) 3281-3298.
   
   
   
   \bibitem{Temam}
   R. Temam, Infinite-Dimensional Dynamical Systems in Mechanic and Physics, 2nd edition. App. Math. Sci., vol. 68. Springer-Verlag, New York, 1997.
    
    \bibitem{Yang}
    Z. Yang, On an extensible beam equation with nonlinear damping and source terms, \emph{J. Differential Equations}, \textbf{254} (2013), 3903-3927.
    

    
    
    \bibitem{Zhao3}
    C.X. Zhao, C.Y. Zhao and C.K. Zhong. The global attractor for a class of extensible beams with nonlocal weak damping, \emph{Discrete Contin. Dyn. Syst. Ser. B}, \textbf{25} (2020), 935-955.
    
    
    \bibitem{Zhao1}
    C.Y. Zhao, C.X. Zhao and C.K. Zhong, Asymptotic behaviour of the wave equation with nonlocal weak damping and anti-damping, J. Math. Anal. Appl.,
    \textbf{490} (2020), 124186-124202. 
    
    \bibitem{Zhao2}
    C.Y. Zhao, C.K. Zhong and S.L. Yan, Existence of a generalized polynomial attractor for the wave equation with nonlocal weak damping, anti-damping and critical nonlinearity, \emph{Appl. Math. Lett.}, \textbf{128} (2022), 107791.
    
    
    
    \bibitem{Zhao5}
    C.Y. Zhao, C.K. Zhong and C.X. Zhao, Estimate of the attractive velocity of attractors for some dynamical systems (in Chinese), \emph{Scientia Sinica Mathematica}, \textbf{52} (2022), 1-20, doi: 10.1360/SCM-2021-0470.


   










\end{thebibliography}
\end{document}